\newtheorem{theorem}{Theorem}
\newtheorem{corollary}{Corollary}
\newtheorem{lemma}{Lemma}
\newtheorem{proposition}{Proposition}
\newtheorem{definition}{Definition}
\begin{document}
\title{Totally acyclic complexes}

\author[S. Estrada]{Sergio Estrada}

\address{S.E. \ Universidad de Murcia, Murcia 30100, Spain}

\email{sestrada@um.es}

\author[X. Fu]{Xianhui Fu}

\address{X.F.\ School of Mathematics and Statistics, Northeast Normal University, Changchun, China}

\email{fuxianhui@gmail.com}

\author[A. Iacob]{Alina Iacob}

\address{A.I. \ Georgia Southern University, Statesboro, GA 30460,
  U.S.A.}

\email{aiacob@georgiasouthern.edu}

\subjclass[2000]{16E10; 16E30}

\keywords {totally acyclic complex, Gorenstein injective module, Gorenstein projective module, Gorenstein flat module}
\thanks{Acknowledgment. Part of this research was conducted while S. Estrada and A. Iacob visited the Banff International Research Station for Mathematical Innovation and Discovery (BIRS) and the Mathematiches Forschunginstitut Oberwolfach (MFO). The hospitality and support of both institutions is acknowledged with gratitude. Part of the research was carried out when the first author visited Northeast Normal University in China, he gratefully acknowledges the hospitality of the institution. The second author was partially supported by the National Natural Science Foundation of China, Grant No. 1301062. The first author was supported by the grant MTM2013-46837-P and FEDER funds. The first and third authors are also supported by the grant 18394/JLI/13 by the Fundaci\'on S\'eneca-Agencia de Ciencia y Tecnolog\'{\i}a de la Regi\'on de Murcia in the framework of III PCTRM 2011-2014.
}

\begin{abstract}
We prove first (Proposition 3) that, over any ring $R$, an acyclic complex of projective modules is totally acyclic if and only if the cycles of every acyclic complex of Gorenstein projective modules are Gorenstein projective. The dual result for injective and Gorenstein injective modules also holds over any ring $R$ (Proposition 4). And, when $R$ is a GF-closed ring, the analogue result for flat/Gorenstein flat modules is also true (Proposition 5). Then we show (Theorem 2) that over a left noetherian ring $R$, a third equivalent condition can be added to those in Proposition 4, more precisely, we prove that the following are equivalent:
1. Every acyclic complex of injective modules is totally acyclic.
2. The cycles of every acyclic complex of Gorenstein injective modules are Gorenstein injective.
3. Every complex of Gorenstein injective modules is dg-Gorenstein injective.
Theorem 3 shows that the analogue result for complexes of flat and Gorenstein flat modules holds over any left coherent ring $R$.
We prove (Corollary 1) that, over a commutative noetherian ring $R$, the equivalent statements in Theorem 3 hold if and only if the ring is Gorenstein. 
We also prove (Theorem 4) that when moreover $R$ is left coherent and right $n$-perfect (that is, every flat right $R$-module has finite projective dimension $\leq n$)  then statements 1, 2, 3 in Theorem 2 are also equivalent to the following: 4. Every acyclic complex of projective right $R$-modules is totally acyclic. 5. Every acyclic complex of Gorenstein projective right $R$-modules is in $\widetilde{\mathcal{GP}}$. 6. Every complex of Gorenstein projective right $R$-modules is dg-Gorenstein projective.\\
Corollary 2 shows that when $R$ is commutative noetherian of finite Krull dimension, the equivalent conditions (1)-(6) from Theorem 4 are also equivalent to those in Theorem 3 and hold if and only if $R$ is an Iwanaga-Gorenstein ring. Thus we improve slightly on a result of Iyengar's and Krause's; in \cite{IyengarKrause} they proved that for a commutative noetherian ring $R$ with a dualizing complex, the class of acyclic complexes of injectives coincides with that of totally acyclic complexes of injectives if and only if $R$ is Gorenstein. We are able to remove the dualizing complex hypothesis and add more equivalent conditions.\\
In the second part of the paper we focus on two sided noetherian rings that satisfy the Auslander condition. We prove (Theorem 7) that for such a ring $R$ that also has finite finitistic flat dimension, every complex of injective (left and respectively right) $R$-modules is totally acyclic if and only if $R$ is an Iwanaga-Gorenstein ring.
\end{abstract}
\maketitle

\section{introduction}

Homological algebra is at the root of modern techniques in many areas of mathematics including commutative and non commutative algebra, algebraic geometry, algebraic topology and representation theory. Not only that all these areas make use of the homological methods but homological algebra serves as a common language and this makes interactions between these areas possible and fruitful. A relative version of homological algebra is the area called Gorenstein homological algebra. This newer area started in the late 60’s when Auslander introduced a class of finitely generated modules that have a complete resolution. Auslander used these modules to define the notion of the G-dimension of a finite module over a commutative noetherian local ring. Then Auslander and Bridger extended the definition to two sided noetherian rings (1969). The area really took off in the mid 90’s, with the introduction of the Gorenstein (projective and injective) modules by Enochs and Jenda (\cite{enochs:95:gorenstein}). Avramov, Buchweitz, Martsinkovsky, and Reiten proved that if the ring $R$ is both right and left noetherian and if $G$ is a finitely generated Gorenstein projective module, then Enochs' and Jenda's definition agrees with that of Auslander's and Bridger's of module of G-dimension zero.
The Gorenstein flat modules were introduced by Enochs, Jenda and Torrecillas as another extension of Auslander's Gorenstein dimension.\\

The Gorenstein homological methods have proved to be very useful in characterizing various classes of rings. Also, methods and results from Gorenstein homological algebra have successfully been used in algebraic geometry, as well as in representation theory. But the main problem in using the Gorenstein homological methods is that they can only be applied when the corresponding Gorenstein resolutions exist. So the main open problems in this area concern identifying the type of rings over which Gorenstein homological algebra works. Of course one hopes that this is the case for any ring. But so far only the existence of the Gorenstein flat resolutions was proved over arbitrary rings (in \cite{yang:14:gorflat}, 2014). The existence of the Gorenstein projective resolutions and the existence of the Gorenstein injective resolutions are still open problems. And they have been studied intensively in recent years (see for example \cite{CH}, \cite{enochs:12:gorenstein.injective.covers}, \cite{EJL}, \cite{iacob:15:gor.flat.proj}, \cite{holm:05:gor.dim}, \cite{iacob:15:gor.flat}, \cite{murfet:11:gor.proj}).



The Gorenstein (projective, injective, flat) modules are defined in terms of totally acyclic complexes.  We recall that an acyclic complex $P$ of projective $R$-modules ($R$ is an arbitrary ring) is called \emph{totally acyclic} if the complex $Hom(P,Q)$ is still exact for any projective module $Q$. A totally acyclic complex of injective modules is defined dually. And an exact complex $F$ of flat left $R$-modules is said to be \emph{$F$-totally acyclic} if $I \otimes F$ is exact for any injective right $R$-module $I$. A module $M$ is Gorenstein injective if and only if it is a cycle of a totally acyclic complex of injective modules. Dually, a module $G$ is Gorenstein projective if it is a cycle of a totally acyclic complex of projective modules. And a Gorenstein flat module is a cycle of an F-totally acyclic complex of flat modules.\\ 
An Iwanaga Gorenstein ring (\cite{iwanaga:79:gor} and \cite{iwanaga:80:gor}) is a two sided noetherian ring $R$ that has finite self injective dimension on both sides. It is known that a commutative Gorenstein ring of finite Krull dimension is Iwanaga-Gorenstein. Over an Iwanaga-Gorenstein ring the exact complexes of projective (injective, flat) modules have some very nice homological properties. More precisely, over an Iwanaga Gorenstein ring every acyclic complex of projective (injective) modules is totally acyclic. And every acyclic complexes of flat modules is F-totally acyclic over any Iwanaga-Gorenstein ring. So over such a ring the class of Gorenstein projective (injective, flat) modules coincides with that of the cycles of acyclic complexes of projectives (injective, flat modules respectively).\\

It is a natural question to consider whether or not these conditions actually characterize Gorenstein rings, or more generally whether or not  it is possible to characterize Gorenstein rings in terms of acyclic complexes of (Gorenstein) injectives, (Gorenstein) projectives and (Gorenstein) flats.  This is one of the main goals of this paper. In the commutative case we encompass and extend recent results by Murfet and Salarian in \cite{murfet:11:gor.proj} and by Iyengar and Krause in \cite{IyengarKrause}.

We give equivalent characterizations of the condition that every acyclic complex of injective (flat, projective respectively) modules is totally acyclic. It involves $\widetilde{\mathcal{A}}$ complexes, as well as dg-$\mathcal{A}$ complexes, so we recall first the following definitions.\\

\begin{definition}
Let $\mathcal{A}$ be a class of $R$-modules. An acyclic complex $X$ is in $\widetilde{\mathcal{A}}$ if $Z_j(X) \in \mathcal{A}$ for all integers j.
\end{definition}

\begin{definition}
Let $(\mathcal{A}, \mathcal{B})$ be a cotorsion pair in $R$-Mod. A complex $Y$ is a dg-$\mathcal{A}$ complex if each $Y_n \in \mathcal{A}$ and if $\mathrm{Ext}^1(Y, U)=0$ for any complex $U$ in $\widetilde{\mathcal{B}}$.

\end{definition}

Throughout the paper we use $\mathcal{GI}$ to denote the class of Gorenstein injective modules, $\mathcal{GP}$ for the class of Gorenstein projective modules, and $\mathcal{GF}$ for that of Gorenstein flat modules.\\

We prove first (Proposition 3) that, over any ring $R$, every acyclic complex of projective modules is totally acyclic if and only if the cycles of every acyclic complex of Gorenstein projective modules are Gorenstein projective. Proposition 4 shows that the analogue result for injective and Gorenstein injective modules also holds over any ring $R$. And Proposition 5 proves that when the ring $R$ is GF-closed the analogue result for flat and Gorenstein flat modules also holds.\\
We prove then (Theorem 2) that when $R$ is a left noetherian ring, a third equivalent condition can be added to those in Proposition 3. More precisely we have:\\

\textbf{Theorem 2.} Let $R$ be a left noetherian ring. The following are equivalent:\\
1. Every acyclic complex of injective left $R$-modules is totally acyclic.\\
2. Every acyclic complex of Gorenstein injective left $R$-modules is in $\widetilde{\mathcal{GI}}$.\\
3. Every complex of Gorenstein injective left $R$-modules is dg-Gorenstein injective. \\

Then, using Proposition 5, we prove the following result:\\

\textbf{Theorem 3.} Let $R$ be a left coherent ring. The following are equivalent:\\
1. Every acyclic complex of flat right $R$-modules is F-totally acyclic. \\ 
2. Every acyclic complex of Gorenstein flat right $R$-modules is in $\widetilde{\mathcal{GF}}$.\\
3. Every complex of Gorenstein flat right $R$-modules is dg-Gorenstein flat. 

We also prove (Theorem 4) that when moreover $R$ is left coherent and right $n$-perfect (that is, every flat right $R$-module has finite projective dimension $\leq n$)  then statements 1, 2, 3 in Theorem 2 are also equivalent to the following:\\
4. Every acyclic complex of projective right $R$-modules is totally acyclic.\\
5. Every acyclic complex of Gorenstein projective right $R$-modules is in $\widetilde{\mathcal{GP}}$.\\
6. Every complex of Gorenstein projective right $R$-modules is dg-Gorenstein projective.\\

We recall that a commutative noetherian ring $R$ is called Gorenstein if for each prime (resp., maximal) ideal $p$, $inj dim_{R_p} R_p < \infty$ (Bass, \cite{Bass}, Section 1). We prove:\\

\textbf{Corollary 1}  Let $R$ be a commutative noetherian ring. Then the following statements are equivalent:\\
1. $R$ is a Gorenstein ring.\\
2. Every acyclic complex of flat $R$-modules is F-totally acyclic. \\ 
3. Every acyclic complex of Gorenstein flat $R$-modules is in $\widetilde{\mathcal{GF}}$.\\
4. Every complex of Gorenstein flat $R$-modules is dg-Gorenstein flat. 

If moreover $R$ has finite Krull dimension then Theorems 2 and 4 give that the following are equivalent (Corollary 2):\\
1. $R$ is an Iwanaga-Gorenstein ring.\\
2. Every acyclic complex of injective $R$-modules is totally acyclic.\\
3. Every acyclic complex of flat $R$-modules is F-totally acyclic. \\ 
4. Every acyclic complex of Gorenstein flat $R$-modules is in $\widetilde{\mathcal{GF}}$.\\
5. Every acyclic complex of Gorenstein injective $R$-modules is in $\widetilde{\mathcal{GI}}$.\\
6. Every complex of Gorenstein injective $R$-modules is dg-Gorenstein injective. \\
7. Every complex of Gorenstein flat $R$-modules id dg-Gorenstein flat.\\
8. Every acyclic complex of projective $R$-modules is totally acyclic.\\
9. Every acyclic complex of Gorenstein projective $R$-modules is in $\widetilde{\mathcal{GP}}$.\\
10. Every complex of Gorenstein projective $R$-modules is dg-Gorenstein projective.

Our Corollary 2 improves on results by Iyengar and Krause (\cite{IyengarKrause}) and by Murfet and Salarian (\cite{murfet:11:gor.proj}). Iyengar and Krause proved that for a commutative noetherian ring $R$ with a dualizing complex, the class of acyclic complexes of injectives coincides with that of totally acyclic complexes of injectives if and only if $R$ is Gorenstein. Then Murfet and Salarian removed the dualizing complex hypothesis and characterized Gorenstein rings in terms of totally acyclic complexes of projectives. We are adding more equivalent characterizations, still under the assumption that $R$ is commutative noetherain of finite Krull dimension.

We consider then a two sided noetherian ring $R$ such that every acyclic complex of injective modules is totally acyclic. We prove (Proposition 7) that if furthermore $R$ satisfies the Auslander condition and has finitistic flat dimension then every injective $R$-module has finite flat dimension. We use this result to prove the following characterization of Iwanaga-Gorenstein rings (Theorem 7):\\
Let $R$ be a two sided noetherian ring of finitistic flat dimension that satisfies the Auslander condition. Then the following are equivalent:\\
1. $R$ is Iwanaga Gorenstein.\\
2. Every acyclic complex of injective left $R$-modules is totally acyclic and every acyclic complex of injective right $R$-modules is totally acyclic.

\section{preliminaries}

Throughout the paper $R$ will denote an associative ring with unit (non necessarily commutative). The category of left $R$-modules will be denoted by $R\textrm{-Mod}$, and the category of unbounded complexes of left $R$-modules will be denoted by $Ch(R)$. Modules are, unless otherwise explicitly stated, left modules.

We recall that an acyclic complex of projective modules $P = \ldots \rightarrow P_1 \rightarrow P_0 \rightarrow P_{-1} \rightarrow \ldots $ is said to be \emph{totally acyclic } if for any projective $R$-module $P$, the complex $\ldots \rightarrow Hom(P_{-1}, P) \rightarrow Hom(P_0, P) \rightarrow Hom(P_1, P) \rightarrow \ldots$ is still acyclic.\\
A module $G$ is Gorenstein projective if it is a cycle of such a totally acyclic complex of projective modules ($G = Z_j(P)$, for some integer $j$).\\

Dually, an acyclic complex of injective modules $I= \ldots \rightarrow I_1 \rightarrow I_0 \rightarrow I_{-1} \rightarrow \ldots $ is said to be \emph{totally acyclic } if for any injective $R$-module $I$, the complex $\ldots \rightarrow Hom(I, I_1) \rightarrow Hom(I, I_0) \rightarrow Hom(I, I_{-1}) \rightarrow \ldots$ is still acyclic.\\
A module $M$ is Gorenstein injective if it is a cycle in a totally acyclic complex of injective modules ($M =  Z_j(I)$ for some integer $j$).\\

We also recall that an acyclic complex of flat left $R$-modules, $F = \ldots \rightarrow F_1 \rightarrow F_0 \rightarrow F_{-1} \rightarrow \ldots $ is said to be \emph{F-totally acyclic } if for any injective right $R$-module $I$, the complex $\ldots \rightarrow I \otimes F_1 \rightarrow I \otimes I_0 \rightarrow I \otimes F_{-1} \rightarrow \ldots$ is still acyclic.\\
A module $H$ is Gorenstein flat if it is a cycle in an F-totally acyclic complex of flat modules ($H = Z_j(F)$ for some integer $j$).\\

A pair of classes $(\mathcal{A}, \mathcal{B})$ in an abelian category $\mathcal D$ is called a {\it cotorsion pair} if $\mathcal{B}=\mathcal{A}^{\perp}$ and $\mathcal{A}=^{\perp}\!\!\! \mathcal{B}$, where for a given class of modules $\mathcal C$, the right orthogonal class $\mathcal C^{\perp}$ is defined to be the class of objects $Y$ in $\mathcal D$ such that $\mathrm{Ext}^1(A,Y)=0$ for all $A\in \mathcal{A}$. Similarly, we define the left orthogonal class $^{\perp}{\mathcal C}$. The cotorsion pair is called {\it hereditary} if $\mathrm{Ext}^i(A,B)=0$ for all $A\in\mathcal A$, $B\in \mathcal B$, and $i\geq 1$. The cotorsion pair is called {\it complete} if for each object $M$ in $\mathcal D$ there exist short exact sequences $0\to M\to B\to A\to 0$ and $0\to B'\to A'\to M\to 0$ with $A,A'\in \mathcal A$ and $B,B'\in \mathcal B$.

Following Gillespie \cite[Definition 3.3 and Proposition 3.6]{Gill04} there are four classes of complexes in $Ch(R)$ that are associated with a cotorsion pair $(\mathcal{A}, \mathcal{B})$ in $R$-Mod:\\
1. An acyclic complex $X$ is an $\mathcal{A}$-complex if $Z_j(X) \in \mathcal{A}$ for all integers $j$. We denote by $\widetilde{\mathcal{A}}$ the class of all acyclic $\mathcal A$-complexes.\\
2. An acyclic complex $U$ is a $\mathcal{B}$-complex if $Z_j(X) \in \mathcal{B}$ for all integers $j$. We denote by $\widetilde{\mathcal{B}}$ the class of all acyclic $\mathcal B$-complexes.\\
3. A complex $Y$ is a dg-$\mathcal{A}$ complex if each $Y_n \in \mathcal{A}$ and each map $Y\to U$ is null-homotopic, for each complex $U\in \widetilde{\mathcal{B}}$. We denote by $ dg (\mathcal{A})$ the class of all dg-$\mathcal{A}$ complexes. \\
4. A complex $W$ is a dg-$\mathcal{B}$ complex if each $W_n \in \mathcal{B}$ and each map $V\to W$ is null-homotopic, for each complex $V\in \widetilde{\mathcal{A}}$. We denote by $ dg (\mathcal{B})$ the class of all dg-$\mathcal{B}$ complexes.

Yang and Liu showed in \cite[Theorem 3.5]{yang:11:cotorsion} that when $(\mathcal{A}, \mathcal{B})$ is a complete hereditary cotorsion pair in $R$-Mod, the pairs $(dg (\mathcal{A}), \widetilde{\mathcal{B}})$ and $(\widetilde{\mathcal{A}}, dg (\mathcal{B}))$ are complete (and hereditary) cotorsion pairs. Moreover, by Gillespie \cite[Theorem 3.12]{Gill04}, we have that  $\widetilde{\mathcal{A}}=dg (\mathcal{A})\bigcap \mathcal E$ and $ \widetilde{\mathcal{B}}=dg (\mathcal{B})\bigcap \mathcal E$ (where $\mathcal E$ is the class of all acyclic complexes). For example, from the (complete and hereditary) cotorsion pairs $(Proj,R\textrm{-Mod})$ and $(R\textrm{-Mod},Inj)$ one obtains the standard (complete and hereditary) cotorsion pairs $(\mathcal E,dg(Inj))$ and $(dg(Proj),\mathcal E)$. Over a left noetherian ring $R$, the pair $(^\bot \mathcal{GI}, \mathcal{GI})$ is complete hereditary cotorsion pair. This is essentially due to Krause in \cite[Theorem 7.12]{Krause} (see Enochs and Iacob \cite[Corollary 1]{enochs:12:gorenstein.injective.covers} for a precise formulation). Therefore $(dg (^\bot \mathcal {GI}), \widetilde{\mathcal {GI}})$ is a complete cotorsion pair in $Ch(R)$. \\ We recall that a ring $R$ is right $n$-perfect if each flat right $R$-module has finite projective dimension $\leq n$. Then if $R$ is left coherent and right $n$-perfect the pair $(\mathcal{GP}, \mathcal{GP}^\bot)$ is also a complete hereditary cotorsion pair in the category of right $R$-modules (see Bravo, Gillespie and Hovey \cite[Proposition 8.10]{gillespie:14:stable} or Estrada, Iacob and Odaba\c s\i\ \cite[Propostion 7]{iacob:15:gor.flat.proj}). Hence the pair $(\widetilde{\mathcal{GP}}, dg (\mathcal{GP}^\bot))$ is a complete cotorsion pair. \\
Finally, if $R$ is a left coherent ring, then $(\mathcal{GF},\mathcal{GF}^{\perp})$ is a complete hereditary cotorsion pair (where $\mathcal{GF}$ is the class of Gorenstein flat right $R$-modules). This is due to Enochs, Jenda and L\'opez-Ramos in \cite{EJL}. The class $\mathcal{GF}^{\perp}$ is known as the class of {\it Gorenstein cotorsion modules} and it is usually denoted by $\mathcal{GC}$. So the pair
$(\widetilde{\mathcal{GF}}, dg (\mathcal{GC}))$ is a complete cotorsion pair.

We also recall that given a class of modules $\mathcal{A}$, we denote by $dw(\mathcal{A})$ the class of complexes of modules, $X$, such that each component, $X_n$, is in $\mathcal{A}$.\\

\section{$\mathcal{A}$-periodic modules}

We prove in this section (Proposition 3) that over any ring $R$ the following statements are equivalent:\\
1. Every acyclic complex of projective modules is totally acyclic.\\
2. Every acyclic complex of Gorenstein projective modules is in $\widetilde{\mathcal{GP}}$.\\

Proposition 4 shows that the analogue result for injective and Gorenstein injective modules also holds over any ring $R$. And Proposition 5 proves that when the ring $R$ is GF-closed the analogue result for flat and Gorenstein flat modules also holds.\\

 We will work in a more general setting. Let $\mathcal{A}$ be a class of modules that is closed under isomorphisms.

\begin{definition}
A module $M$ is called $\mathcal{A}$-periodic module if there exists a short exact sequence $0\rightarrow M\rightarrow A\rightarrow M\rightarrow 0$ with $A\in\mathcal{A}$.

\end{definition}

Assume that $\mathcal{A}$ is closed under direct sums. Then one can see easily that the class of $\mathcal{A}$-periodic modules is closed under direct sums. Let $A = \ldots \rightarrow A_{n+1} \rightarrow A_n \rightarrow A_{n-1} \rightarrow \ldots $ be an acyclic complex. Then, by Fu and Herzog (\cite{FH}), the averaging complex of $A$ is the complex $$\oplus_{n\in \mathbb{Z}}\Sigma^nA=\cdots\to \oplus A_{n}\to \oplus A_n\to \oplus A_{n}\to\cdots$$ associated to $A$, defined as the coproduct of all the iterated suspensions and desuspensions of $A$. It is clear that the cycle of the averaging complex is a periodic $\mathcal{A}$-module, and every cycle of the complex $A$ is a direct summand of the cycle of the averaging complex. This trick was used by Christensen and Holm in \cite[Proposition 7.6]{CH} firstly, and later by Fu and Herzog in \cite{FH}, to show that a result of Neeman's \cite[Theorem 8.6 and Remark 2.15]{neeman} (every acyclic complex of projectives with flat cycles is contractible) can be deduced from a result proved by Benson and Goodearl in \cite[Theorem 2.5]{BG}. 

If we let $\mathcal{B}$ be a class of modules which is closed under isomorphisms and direct summands, then the above trick will give us the following easy observation:\\

\begin{proposition}\label{eq.periodic} Assume that $\mathcal{A}$ is closed under direct sums.
The following are equivalent:\\
1. The cycles of  every acyclic $dw \mathcal{A}$-complex belong to $\mathcal{B}$.\\
2. Every $\mathcal{A}$-periodic module belongs to  $\mathcal{B}$.
\end{proposition}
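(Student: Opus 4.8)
The plan is to prove the two implications separately, both by exploiting the splicing/averaging construction recalled just above the statement.

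For $(1)\Rightarrow(2)$ I would argue directly. Suppose $M$ is $\mathcal{A}$-periodic, witnessed by a short exact sequence $0\to M\xrightarrow{\iota}A\xrightarrow{\pi}M\to 0$ with $A\in\mathcal{A}$. Splicing this sequence with itself infinitely often in both directions produces the acyclic complex
$$\cdots\to A\xrightarrow{\iota\pi}A\xrightarrow{\iota\pi}A\to\cdots,$$
all of whose terms equal $A$ and all of whose cycles equal $\Im(\iota)\cong M$. This is an acyclic complex in $dw(\mathcal{A})$, so by hypothesis (1) its cycles lie in $\mathcal{B}$; since $\mathcal{B}$ is closed under isomorphisms, $M\in\mathcal{B}$.

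For $(2)\Rightarrow(1)$ I would invoke the averaging complex. Let $A=\cdots\to A_{n+1}\to A_n\to A_{n-1}\to\cdots$ be an acyclic complex with every $A_n\in\mathcal{A}$, and fix an integer $j$; the goal is $Z_j(A)\in\mathcal{B}$. Pass to $\bigoplus_{n\in\mathbb{Z}}\Sigma^nA$. In each degree $k$ its term is $\bigoplus_{n\in\mathbb{Z}}A_{k-n}=\bigoplus_{m\in\mathbb{Z}}A_m$, which lies in $\mathcal{A}$ because $\mathcal{A}$ is closed under direct sums; and since direct sums are exact, the cycle in each degree is $\bigoplus_{n\in\mathbb{Z}}Z_n(A)$. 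The short exact sequence expressing the degree-$j$ term of this complex as an extension of consecutive cycles then reads
$$0\to\bigoplus_{n\in\mathbb{Z}}Z_n(A)\to\bigoplus_{n\in\mathbb{Z}}A_n\to\bigoplus_{n\in\mathbb{Z}}Z_n(A)\to 0,$$
which exhibits $\bigoplus_{n\in\mathbb{Z}}Z_n(A)$ as an $\mathcal{A}$-periodic module. By hypothesis (2) this module lies in $\mathcal{B}$, and since $\mathcal{B}$ is closed under direct summands, each $Z_n(A)$ — in particular $Z_j(A)$ — lies in $\mathcal{B}$.

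I do not expect a genuine obstacle here; this is precisely the ``trick'' of Christensen--Holm and Fu--Herzog recalled above, and the two hypotheses are used in exactly the two places indicated (closure of $\mathcal{A}$ under direct sums to keep the averaged terms in $\mathcal{A}$, closure of $\mathcal{B}$ under direct summands to descend from the periodic cycle to the individual cycles). The only point demanding a little care is the bookkeeping with the shifts: verifying that $\bigoplus_{n}\Sigma^nA$ has constant term $\bigoplus_m A_m$ and constant cycle $\bigoplus_m Z_m(A)$ in every degree, which follows from the reindexing $\bigoplus_n(\Sigma^nA)_k=\bigoplus_n A_{k-n}$ together with the exactness of direct sums for the cycle computation.
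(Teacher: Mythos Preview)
Your proof is correct and follows essentially the same route as the paper: both directions use exactly the constructions you describe (splicing for $(1)\Rightarrow(2)$, the averaging complex for $(2)\Rightarrow(1)$), and you have in fact spelled out the reindexing and the identification of the cycles more explicitly than the paper does.
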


\begin{proof} 1. $\Rightarrow$ 2.  Let $M$ be an $\mathcal{A}$-periodic module. Then there is a short exact sequence $0\to M\to A\to M\to 0$ with $A\in\mathcal{A}$. One gets an acyclic complex $\cdots\to A\to A\to A\to\cdots$ immediately such that $M$ is a cycle of this complex. Then $M$ belongs to $\mathcal{B}$ by (1).\\
 2. $\Rightarrow$ 1. Let $A=\cdots\to A_{n+1}\to A_{n}\to A_{n-1}\to\cdots$ be an acyclic $dw \mathcal{A}$-complex and take the averaging complex  $\oplus_{n\in\mathbb{Z}}\Sigma^n A=\cdots\to \oplus A_{n}\to \oplus A_n\to \oplus A_{n}\to\cdots$ of $A$. Then one can see that every cycle of the averaging complex is an $\mathcal{A}$-periodic module, and hence belongs to $\mathcal{B}$. But every cycle of the complex $A$ is a direct summand of the cycle of the averaging complex, and hence belongs to $\mathcal{B}$, since $\mathcal{B}$ is closed under direct summands.
\end{proof}
If the class $\mathcal{A}$ is closed under direct products, then for a complex $A = \ldots \rightarrow A_{n+1} \rightarrow A_n \rightarrow A_{n-1} \rightarrow \ldots$ one can use the complex $$\prod_{n\in \mathbb{Z}}\Sigma^nA=\cdots\to \prod A_{n}\to \prod A_n\to \prod A_{n}\to\cdots$$ instead of the averaging complex of $A$, and prove that we still have that every cycle of $A$ is a direct summand of the cycle of the complex $\prod_{n\in \mathbb{Z}}\Sigma^nA$. So we also have
\begin{proposition}\label{eq.periodic1} Assume that $\mathcal{A}$ is closed under direct products.
The following are equivalent:\\
1. The cycles of  every acyclic $dw \mathcal{A}$-complex belong to $\mathcal{B}$.\\
2. Every $\mathcal{A}$-periodic module belongs to  $\mathcal{B}$.
\end{proposition}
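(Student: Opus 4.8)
The plan is to follow the proof of Proposition~\ref{eq.periodic} essentially word for word, replacing the averaging complex $\oplus_{n\in\mathbb{Z}}\Sigma^n A$ by the product complex $\prod_{n\in\mathbb{Z}}\Sigma^n A$ throughout. No new idea is required; the hypothesis that $\mathcal{A}$ is closed under direct products enters only to guarantee that the terms of this new complex remain in $\mathcal{A}$.

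For $1\Rightarrow 2$ there is nothing to change. Given an $\mathcal{A}$-periodic module $M$, a defining short exact sequence $0\to M\to A\to M\to 0$ with $A\in\mathcal{A}$ splices into an acyclic $dw\mathcal{A}$-complex $\cdots\to A\to A\to A\to\cdots$ one of whose cycles is $M$, and hypothesis (1) forces $M\in\mathcal{B}$. This direction uses no closure property of $\mathcal{A}$.

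For $2\Rightarrow 1$ I would start with an acyclic $dw\mathcal{A}$-complex $A=\cdots\to A_{n+1}\to A_n\to A_{n-1}\to\cdots$ and form $C:=\prod_{n\in\mathbb{Z}}\Sigma^n A$. Since $\mathcal{A}$ is closed under direct products, each term of $C$ — which up to reindexing is $\prod_{n} A_n$ — lies in $\mathcal{A}$, so $C$ is again a $dw\mathcal{A}$-complex; and because the direct product functor is exact on modules (and so commutes with kernels and homology), $C$ is acyclic with $Z_k(C)=\prod_{n}Z_k(\Sigma^n A)=\prod_{n}Z_{k-n}(A)=\prod_{m\in\mathbb{Z}}Z_m(A)$, independently of $k$. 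Thus the short exact sequence $0\to Z_k(C)\to C_k\to Z_{k-1}(C)\to 0$ exhibits $Z:=\prod_{m\in\mathbb{Z}}Z_m(A)$ as an $\mathcal{A}$-periodic module, whence $Z\in\mathcal{B}$ by (2). Finally, each cycle $Z_j(A)$ occurs as a factor of the product $Z$, hence is a direct summand of $Z$; since $\mathcal{B}$ is closed under direct summands, $Z_j(A)\in\mathcal{B}$, which is (1).

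The only step that requires a moment's care — and it is the main, though mild, obstacle — is the exactness of $C$ together with the identification $Z_k(C)=\prod_{m}Z_m(A)$, i.e. the fact that in a module category arbitrary direct products are exact and commute with passage to cycles of an acyclic complex. Once this is granted, the argument is purely formal and parallels Proposition~\ref{eq.periodic} exactly.
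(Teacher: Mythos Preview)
Your proposal is correct and follows precisely the approach indicated in the paper: the paper does not give a separate proof but simply remarks that one replaces the averaging complex $\oplus_{n\in\mathbb{Z}}\Sigma^n A$ by $\prod_{n\in\mathbb{Z}}\Sigma^n A$ and notes that every cycle of $A$ is still a direct summand of the cycle of this product complex. Your write-up in fact supplies more detail than the paper itself, including the verification that products of modules are exact and commute with taking cycles.
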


Next let $\mathcal{B}$ be a class of modules closed under isomorphisms, direct summands and extensions, and assume that both $\mathcal{A}$ and $\mathcal{B}$ are closed under direct sums (resp., direct products). Moreover, we assume that $\mathcal{A} \subseteq \mathcal{B}$ and that every module in $\mathcal{B}$ appears as a cycle of some acyclic $dw \mathcal{A}$-complex. Then we have the following main theorem of this section.

\begin{theorem}The following are equivalent:\\
1. The cycles of  every acyclic $dw \mathcal{A}$-complex belong to $\mathcal{B}$.\\
2. The cycles of  every acyclic $dw \mathcal{B}$-complex belong to $\mathcal{B}$.\\
3. Every $\mathcal{A}$-periodic module belongs to $\mathcal{B}$.\\
4. Every $\mathcal{B}$-periodic module belongs to $\mathcal{B}$.\\
\end{theorem}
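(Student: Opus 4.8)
The plan is to prove the cycle of implications $1 \Rightarrow 3 \Rightarrow 4 \Rightarrow 2 \Rightarrow 1$, using Proposition~\ref{eq.periodic} (in the direct sums case; Proposition~\ref{eq.periodic1} in the direct products case) to handle the passages between statements about cycles of $dw$-complexes and statements about periodic modules.

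First, $1 \Leftrightarrow 3$ is immediate: it is exactly Proposition~\ref{eq.periodic} applied to the pair $(\mathcal{A}, \mathcal{B})$, which is legitimate since $\mathcal{A}$ is closed under direct sums and $\mathcal{B}$ is closed under isomorphisms and direct summands. Likewise $2 \Leftrightarrow 4$ would follow from Proposition~\ref{eq.periodic} applied to the pair $(\mathcal{B}, \mathcal{B})$ — here we need $\mathcal{B}$ closed under direct sums (which is assumed) and closed under isomorphisms and direct summands (also assumed) — so once we have any one of the four statements the other three in the same ``halves'' come for free, and the whole theorem reduces to establishing a single implication between the two halves, say $3 \Rightarrow 4$.

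So the heart of the matter is: assuming every $\mathcal{A}$-periodic module lies in $\mathcal{B}$, show every $\mathcal{B}$-periodic module lies in $\mathcal{B}$. Let $0 \to M \to B \to M \to 0$ be exact with $B \in \mathcal{B}$. By hypothesis, $B$ itself appears as a cycle of some acyclic $dw\,\mathcal{A}$-complex, say $B = Z_0(X)$ with $X \in dw\,\mathcal{A}$ acyclic. The idea is to splice: the short exact sequence $0 \to M \to B \to M \to 0$ together with a $dw\,\mathcal{A}$-resolution/coresolution of $B$ on each side should be assembled into an acyclic complex all of whose terms lie in $\mathcal{A}$ and one of whose cycles is $M$ — more precisely, one builds from $X$ (truncated appropriately above and below $B$) and the two-step complex $B \to B$ obtained by composing $B \twoheadrightarrow M \hookrightarrow B$, an acyclic complex $Y \in dw\,\mathcal{A}$ with $M$ a cycle. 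Applying statement~1 (equivalently~3, already available) to $Y$ gives $M \in \mathcal{B}$. The point where closure of $\mathcal{B}$ under \emph{extensions} is used is in verifying that the relevant cycles of the spliced complex that are \emph{not} $M$ — namely $B$ sitting between copies of cycles of $X$ — are still legitimate, or alternatively in showing the spliced object is genuinely acyclic with the stated cycle; and closure under direct summands is used exactly as in Proposition~\ref{eq.periodic} to pass from the averaging complex back to $Y$ itself.

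I expect the main obstacle to be the bookkeeping in the splicing construction: one must carefully arrange the acyclic $dw\,\mathcal{A}$-complex resolving $B$ on the right of $M$ and the one coresolving $B$ on the left of $M$ so that the connecting differential $B \to B$ (through $M$) fits, and then check acyclicity of the result at every spot and identify $Z_j(Y)$ at the splice point as $M$. Once that complex $Y$ is in hand, the rest is a direct application of the equivalences already proved via the averaging-complex trick. An alternative, possibly cleaner, route to $3 \Rightarrow 4$: given the $\mathcal{B}$-periodic $M$ with $0\to M\to B\to M\to 0$, form the acyclic complex $\cdots \to B \to B \to B \to \cdots$ with this as a cycle, replace each $B$ by an acyclic $dw\,\mathcal{A}$-complex (using the hypothesis that every module of $\mathcal{B}$ is such a cycle) and take a suitable total complex / horseshoe-style assembly to land in $dw\,\mathcal{A}$; I would try the direct splicing first and fall back on the total-complex version if the differentials do not line up cleanly.
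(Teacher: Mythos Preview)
Your high-level architecture matches the paper exactly: $1\Leftrightarrow 3$ and $2\Leftrightarrow 4$ via Proposition~\ref{eq.periodic}/\ref{eq.periodic1}, $4\Rightarrow 3$ trivially from $\mathcal A\subseteq\mathcal B$, and the real content is $3\Rightarrow 4$, where one must manufacture an acyclic $dw\,\mathcal A$-complex having the $\mathcal B$-periodic module $K$ as a cycle. The gap is in that manufacture. Your ``direct splicing'' does not produce an acyclic complex: if $B=Z_0(X)$ and you replace the differential $X_0\to X_{-1}$ by the composite $X_0\twoheadrightarrow B\twoheadrightarrow K\hookrightarrow B\hookrightarrow X_{-1}$, then $\ker(X_0\to X_{-1})$ is the preimage in $X_0$ of $K\subseteq B$, an extension of $K$ by $Z_1(X)$, which strictly contains $\operatorname{im}(X_1\to X_0)=Z_1(X)$; exactness fails at $X_0$ (and dually at $X_{-1}$). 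The horseshoe/total-complex fallback has the same problem: you have no $\mathcal A$-resolution of $K$ to feed into a horseshoe, only of $B$.

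What the paper actually does is an \emph{iterative} pushout/pullback construction, and this is precisely where closure of $\mathcal B$ under extensions earns its keep (not where you locate it). From $0\to K\to G\to K\to 0$ and a two-step $\mathcal A$-resolution $0\to G_1\to E_0\to E^0\to G^1\to 0$ of $G$, one first composes to get $0\to X_0\to E_0\to K\to 0$ and $0\to K\to E^0\to X^0\to 0$. Then one forms the pushout of $K\hookrightarrow G$ along $K\hookrightarrow X^0$, obtaining $Y^0$ sitting in $0\to G\to Y^0\to G^1\to 0$; closure under extensions gives $Y^0\in\mathcal B$, so $Y^0$ admits a fresh short $\mathcal A$-coresolution $0\to Y^0\to E^1\to Y^1\to 0$, and composing $X^0\hookrightarrow Y^0\hookrightarrow E^1$ extends the complex one step to the right with $X^1$ as the new cokernel. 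Iterating (and dually on the left, starting from $0\to G_1\to X_0\to K\to 0$) produces the desired acyclic $dw\,\mathcal A$-complex $\cdots\to E_1\to E_0\to E^0\to E^1\to\cdots$ with $K$ a cycle. The step you were missing is that at each stage one must \emph{return to $\mathcal B$} via an extension in order to pull a new piece of $\mathcal A$-resolution; a single fixed $X$ resolving $B$ is not enough.
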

\begin{proof} Note that $1. \Leftrightarrow 3.$ and $2. \Leftrightarrow 4.$ follow from Proposition 1 or Proposition 2, and $4.\Rightarrow 3.$ is trivial since in this case $\mathcal{A}\subseteq\mathcal{B}$. So we only need to show $3. \Rightarrow 4 $. By our assumptions on $\mathcal{B}$ and (3), we see that $\mathcal{B}$ coincides with the class of cycles of acyclic $dw \mathcal{A}$-complexes.
Let $K$ be a $\mathcal{B}$-periodic module. Then there is an exact sequence $0\to K\to G\to K\to 0$ with $G\in\mathcal{B}$. Since $G$ belongs to $\mathcal{B}$, there is an exact sequence $0\to G_1\to E_0\to E^0\to G^1\to 0$ such that $G$ is the image of $E_0\to E^0$, $E_0$ and $E^0$ belong to $\mathcal{A}$, and $G_1$ and $G^1$ belong to $\mathcal{B}$. Composing the two epimorphisms $G\to K$ and $E_0\to G$, we get the following commutative diagram


$$\xymatrix{~&0\ar[d]&0\ar[d]&~&~\\
~&G_1\ar[d]\ar@{=}[r]&G_1\ar[d]&~&~\\
0\ar[r]&X_0\ar[r]\ar[d]&E_0\ar[r]\ar[d]&K\ar[r]\ar@{=}[d]&0\\
0\ar[r]&K\ar[r]\ar[d]&G\ar[r]\ar[d]&K\ar[r]&0\\
~&0&0&~&~}.$$

Similarly, we have a commutative diagram:\\


$$\xymatrix{~&~&0\ar[d]&0\ar[d]&~\\
0\ar[r]&K\ar[r]\ar@{=}[d]&G\ar[r]\ar[d]&K\ar[r]\ar[d]&0\\
0\ar[r]&K\ar[r]&E^0\ar[r]\ar[d]&X^0\ar[r]\ar[d]&0\\
~&~&G^1\ar[d]\ar@{=}[r]&G^1\ar[d]&~\\
~&~&0&0&~}.$$

The above two diagrams give us the following commutative diagram\\


$$\xymatrix{~&E_0\ar[rr]\ar[rd]&&E^0\ar[rd]&~\\
X_0\ar[ru]\ar[rd]&&K\ar[ru]\ar[rd]&&X^0\\
~&0\ar[ru]&&0\ar[ru]&~}$$

Now take the pushout of $K\to G$ along the exact sequence $0\to K\to X^0\to G^1\to 0$, we get the following commutative diagram\\


$$\xymatrix{~&0\ar[d]&0\ar[d]&~&~\\
0\ar[r]&K\ar[r]\ar[d]&X^0\ar[r]\ar[d]&G^1\ar[r]\ar@{=}[d]&0\\
0\ar[r]&G\ar[r]\ar[d]&Y^0\ar[r]\ar[d]&G^1\ar[r]&0\\
~&K\ar[d]\ar@{=}[r]&K\ar[d]&~&~\\
~&0&0~&&~}.$$

Since $\mathcal{B}$ is closed under extensions, and $G$ and $G^1$ belong to $\mathcal{B}$, we have $Y^0 \in\mathcal{B}$. Then there is an exact sequence $0\to Y^0\to E^1\to Y^1\to 0$ with $E^1\in\mathcal{A}$, and $Y^1$ and $Y^0$ in $\mathcal{B}$. Now composing the two monomorphisms $X^0\to Y^0$ and $Y^0\to E^1$, we get the following two commutative diagrams:


$$\xymatrix{~&~&0\ar[d]&0\ar[d]&~\\
0\ar[r]&X^0\ar[r]\ar@{=}[d]&Y^0\ar[r]\ar[d]&K\ar[r]\ar[d]&0\\
0\ar[r]&X^0\ar[r]&E^1\ar[r]\ar[d]&X^1\ar[r]\ar[d]&0\\
~&~&Y^1\ar[d]\ar@{=}[r]&Y^1\ar[d]&~\\
~&~&0&0&~}$$

and

$$\xymatrix{~&E_0\ar[rr]\ar[rd]&&E^0\ar[rd]\ar[rr]&&E^1\ar[rd]\\
X_0\ar[ru]\ar[rd]&&K\ar[ru]\ar[rd]&&X^0\ar[ru]\ar[rd]&&X^1\\
~&0\ar[ru]&&0\ar[ru]&&0\ar[ru]&~}.$$ Continuing this procedure, we get the follow exact sequence
$$E_0\to E^0\to E^1\to\cdots .$$ Using the sequence $0\to G_1\to X_0\to K\to 0$ and a dual argument to the one above, finally we get an acyclic $\mathcal{A}$-complex
 $$\cdots\to E_1\to E_0\to E^0\to E^1\to\cdots$$ with $K$ a cycle of this complex, so $K \in \mathcal{B}$, as desired.
\end{proof}

As applications, we obtain the results mentioned in the beginning of this section.\\

\begin{proposition}The following are equivalent:\\
1. Every acyclic complex of projective modules is totally acylic;\\
2. The cycles of every acyclic complex of Gorenstein projective modules are Gorenstein projective.
\end{proposition}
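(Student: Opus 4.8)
The plan is to derive this proposition directly from Theorem~1 by choosing the classes $\mathcal{A}$ and $\mathcal{B}$ appropriately. Set $\mathcal{A} = Proj$, the class of projective left $R$-modules, and $\mathcal{B} = \mathcal{GP}$, the class of Gorenstein projective left $R$-modules. I first need to verify that this pair satisfies the running hypotheses of Theorem~1: $\mathcal{B}$ must be closed under isomorphisms, direct summands, and extensions; both $\mathcal{A}$ and $\mathcal{B}$ must be closed under direct sums; we need $\mathcal{A} \subseteq \mathcal{B}$; and every module in $\mathcal{B}$ must appear as a cycle of some acyclic $dw\mathcal{A}$-complex. The closure of $Proj$ under direct sums is clear, and $\mathcal{GP}$ is closed under direct sums, extensions, and direct summands by standard facts in Gorenstein homological algebra (e.g.\ Holm's work on Gorenstein projective modules). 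The inclusion $Proj \subseteq \mathcal{GP}$ is immediate, and every Gorenstein projective module is by definition a cycle of a totally acyclic --- in particular acyclic $dw\,Proj$ --- complex. So the hypotheses of Theorem~1 all hold.

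With these choices in hand, I would then simply translate the four equivalent conditions of Theorem~1 into the language of the proposition. Condition~2 of Theorem~1 reads ``the cycles of every acyclic $dw\,Proj$-complex belong to $\mathcal{GP}$'', i.e.\ every acyclic complex of projective modules has Gorenstein projective cycles. The key point here is that for an acyclic complex of projectives, having Gorenstein projective cycles is equivalent to being totally acyclic: one direction is trivial, and for the other, if all cycles $Z_j(P)$ are Gorenstein projective then each short exact sequence $0 \to Z_{j+1}(P) \to P_j \to Z_j(P) \to 0$ has $\mathrm{Ext}^1(Z_j(P), Q) = 0$ for every projective $Q$ (since $Z_j(P)$ is Gorenstein projective), so applying $\mathrm{Hom}(-, Q)$ to $P$ keeps it exact. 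This gives the equivalence of statement~1 of the proposition with condition~2 of Theorem~1. Statement~2 of the proposition is condition~1 of Theorem~1 (``the cycles of every acyclic $dw\mathcal{B}$-complex belong to $\mathcal{B}$'' when $\mathcal{B} = \mathcal{GP}$). Hence statements 1 and 2 of the proposition are both among the equivalent conditions of Theorem~1, and we are done.

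The only step that requires genuine care is the observation just described: that for an \emph{acyclic} complex of projectives, totally acyclic is the same as having all cycles Gorenstein projective. This is folklore but worth stating cleanly; the nontrivial implication uses that $\mathcal{GP}$ is exactly the class of syzygies of totally acyclic complexes of projectives, together with the hereditary property $\mathrm{Ext}^{i}(\mathcal{GP}, Proj) = 0$ for $i \geq 1$. I expect this to be the main (though still modest) obstacle; everything else is a matter of checking that the abstract hypotheses of Theorem~1 are met by the concrete pair $(Proj, \mathcal{GP})$ and then reading off the conclusion.

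\begin{proof}
Apply Theorem~1 with $\mathcal{A} = Proj$ the class of projective left $R$-modules and $\mathcal{B} = \mathcal{GP}$ the class of Gorenstein projective left $R$-modules. The class $\mathcal{GP}$ is closed under isomorphisms, direct summands, extensions, and direct sums, the class $Proj$ is closed under direct sums, $Proj \subseteq \mathcal{GP}$, and every Gorenstein projective module is a cycle of a (totally acyclic, hence acyclic $dw\,Proj$) complex of projectives; so all hypotheses of Theorem~1 hold. By that theorem, the following are equivalent:
(i) the cycles of every acyclic complex of projectives belong to $\mathcal{GP}$;
(ii) the cycles of every acyclic complex of Gorenstein projectives belong to $\mathcal{GP}$.
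Statement~(ii) is precisely statement~2 of the proposition. It remains to see that (i) is equivalent to statement~1. If every acyclic complex of projectives is totally acyclic, then for such a complex $P$ and each $j$, the module $Z_j(P)$ is by definition Gorenstein projective, so (i) holds. Conversely, assume (i), and let $P$ be an acyclic complex of projectives and $Q$ a projective module. By (i) each $Z_j(P)$ is Gorenstein projective, so $\mathrm{Ext}^1(Z_j(P), Q) = 0$; applying $\mathrm{Hom}(-, Q)$ to the short exact sequences $0 \to Z_{j+1}(P) \to P_j \to Z_j(P) \to 0$ shows that $\mathrm{Hom}(P, Q)$ is exact, i.e.\ $P$ is totally acyclic. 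Thus (i) is equivalent to statement~1, and the proof is complete.
\end{proof}
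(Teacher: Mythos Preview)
Your proof is correct and follows essentially the same route as the paper: set $\mathcal{A}=Proj$, $\mathcal{B}=\mathcal{GP}$, verify the hypotheses of Theorem~1, and read off the equivalence, with the extra observation that for an acyclic complex of projectives ``all cycles Gorenstein projective'' is equivalent to ``totally acyclic''. One cosmetic slip: in your informal discussion you have the labels of conditions~1 and~2 of Theorem~1 interchanged (condition~1 concerns $dw\mathcal{A}$-complexes, condition~2 concerns $dw\mathcal{B}$-complexes), but your formal proof does not rely on those labels and is unaffected.
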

\begin{proof}
Let $\mathcal{A}$ be the class of projective modules, and let $\mathcal{B}$ be the class of Gorenstein projective modules. Then $\mathcal{A}$ and $\mathcal{B}$ are closed under direct sums, direct summands, and extensions. Note that $2 \Rightarrow 1$ is trivial. For $1 \Rightarrow 2$, it is clear that if every acyclic complex of projective modules is totally acylic, then every cycle of an acyclic complex of projective modules is Gorenstein projective. Therefore by Theorem 1, we get that the cycles of every acyclic complex of Gorenstein projective modules are Gorenstein projective.
\end{proof}

If $\mathcal{A}$ is the class of injective modules, and $\mathcal{B}$ is the class of Gorenstein injective modules, then by a similar argument as above, we have the following result.

\begin{proposition}The following are equivalent:\\
1. Every acyclic complex of injective modules is totally acylic.\\
2. The cycles of every acyclic complex of Gorenstein injective modules are Gorenstein injective.
\end{proposition}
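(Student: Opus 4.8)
The plan is to deduce Proposition 4 from Theorem 1 in exactly the way Proposition 3 was deduced, taking $\mathcal{A}$ to be the class of all injective left $R$-modules and $\mathcal{B}=\mathcal{GI}$ the class of all Gorenstein injective left $R$-modules. The one genuine modification compared with the projective case is that here one must run the ``resp.\ direct products'' branch of the argument (Proposition 2 in place of Proposition 1): over an arbitrary ring injectives need not be closed under direct sums, so the averaging complex $\bigoplus_n \Sigma^n A$ must be replaced by the product complex $\prod_n \Sigma^n A$, which is legitimate because injective modules are always closed under direct products.

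First I would verify the hypotheses of Theorem 1 for this pair $(\mathcal{A},\mathcal{B})$. The class $\mathcal{B}=\mathcal{GI}$ is closed under isomorphisms, under direct summands, and under extensions — these are the standard closure properties of Gorenstein injective modules, valid over any ring. Both $\mathcal{A}$ and $\mathcal{B}$ are closed under direct products: for injectives this is immediate, and for $\mathcal{GI}$ one observes that a direct product of totally acyclic complexes of injectives is again totally acyclic — a product of injectives is injective, the product functor is exact on $R\text{-Mod}$, and $\Hom(E,-)$ commutes with products — while the cycles of the product complex are the products of the cycles of the factors, so a product of Gorenstein injective modules is Gorenstein injective. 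Finally $\mathcal{A}\subseteq\mathcal{B}$ because every injective module is Gorenstein injective, and every module of $\mathcal{B}=\mathcal{GI}$ is, by definition, a cycle of a totally acyclic complex of injectives, hence of an acyclic $dw\mathcal{A}$-complex. Thus Theorem 1 applies and its four conditions are equivalent.

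It then remains to match conditions (1) and (2) of Theorem 1 with the two statements of Proposition 4. Condition (2) of Theorem 1 — the cycles of every acyclic $dw\mathcal{B}$-complex lie in $\mathcal{B}$ — is verbatim statement (2) of Proposition 4. And condition (1) of Theorem 1 — the cycles of every acyclic complex of injectives are Gorenstein injective — is equivalent to statement (1) of Proposition 4: one implication is just the definition of total acyclicity; for the other, if $I$ is an acyclic complex of injectives whose cycles $Z_n$ are Gorenstein injective, then $\mathrm{Ext}^1(E,Z_n)=0$ for every injective $E$ (a Gorenstein injective module is right $\mathrm{Ext}$-orthogonal to every module of finite injective dimension, in particular to injective modules), so each short exact sequence $0\to Z_{n+1}\to I_n\to Z_n\to 0$ remains exact after applying $\Hom(E,-)$; splicing these shows $\Hom(E,I)$ is exact, i.e.\ $I$ is totally acyclic. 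Hence statement (1) $\Leftrightarrow$ Theorem 1(1) $\Leftrightarrow$ Theorem 1(2) $=$ statement (2), which proves the proposition.

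I expect the only steps needing more than a formal check to be the closure of $\mathcal{GI}$ under direct products — equivalently, that a product of totally acyclic complexes of injectives is totally acyclic — and the $\mathrm{Ext}$-orthogonality input used in the ``cycles Gorenstein injective $\Rightarrow$ totally acyclic'' half of the equivalence; both are routine, and everything else is the mirror image, word for word, of the proof of Proposition 3.
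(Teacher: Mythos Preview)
Your proposal is correct and follows exactly the approach the paper intends: the paper's proof of Proposition 4 is literally the single line ``by a similar argument as above'' referring to the proof of Proposition 3, i.e.\ take $\mathcal{A}=Inj$, $\mathcal{B}=\mathcal{GI}$ and apply Theorem 1. Your explicit observation that one must invoke the direct-product branch (Proposition 2) rather than the direct-sum branch, since injectives over an arbitrary ring need not be closed under sums, is a genuine point the paper leaves implicit, and the closure and orthogonality verifications you list are exactly the checks needed.
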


Recall that a ring $R$ is GF-closed if the class Gorenstein flat modules is closed under extensions. \\
If $\mathcal{A}$ is the class of flat modules, and $\mathcal{B}$ is the class of Gorenstein flat modules, then over a GF-closed ring $R$ we have the following result.
\begin{proposition}Let $R$ be a GF-closed ring. The following are equivalent:\\
1. Every acyclic complex of flat modules is totally F-acylic.\\
2. The cycles of every acyclic complex of Gorenstein  flat modules are Goresntein flat.
\end{proposition}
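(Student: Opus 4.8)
The plan is to run exactly the argument already used for Propositions 3 and 4, namely to apply Theorem 1 with $\mathcal{A}$ the class of flat left $R$-modules and $\mathcal{B}=\mathcal{GF}$ the class of Gorenstein flat left $R$-modules, the role of ``totally acyclic'' now being taken over by ``$F$-totally acyclic''. Once the standing hypotheses of Theorem 1 have been verified for this pair, both implications of the Proposition drop out almost formally, exactly as in Proposition 3.

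First I would check those hypotheses. Both the class of flat modules and the class $\mathcal{GF}$ are closed under arbitrary direct sums: for $\mathcal{GF}$ this holds over any ring, since a direct sum of $F$-totally acyclic complexes of flat modules is again $F$-totally acyclic (the functor $I\otimes -$ and homology both commute with direct sums), so we may use the direct-sum version of the periodicity lemma, Proposition 1, inside Theorem 1. The class $\mathcal{GF}$ is obviously closed under isomorphisms; it is closed under extensions precisely because $R$ is GF-closed; and it is closed under direct summands — this last being the one genuine use of the GF-closed hypothesis beyond extension-closure, and a known property of GF-closed rings. Moreover $\mathcal{A}\subseteq\mathcal{B}$, since any flat module $F$ is a cycle of the evidently $F$-totally acyclic complex whose only nonzero differential is the identity of $F$, hence is Gorenstein flat. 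Finally, by the very definition of Gorenstein flat module, every object of $\mathcal{B}$ is a cycle of some $F$-totally acyclic complex of flat modules, which is in particular an acyclic $dw\mathcal{A}$-complex; so the remaining hypothesis of Theorem 1 — that every module in $\mathcal{B}$ occurs as a cycle of an acyclic $dw\mathcal{A}$-complex — holds as well.

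It then remains to read off the two implications. The implication $2\Rightarrow 1$ is immediate: an acyclic complex of flat modules is an acyclic complex of Gorenstein flat modules, so by (2) all of its cycles $Z_j$ are Gorenstein flat; since $\Tor{1}{R}(I,N)=0$ for every injective right $R$-module $I$ and every $N\in\mathcal{GF}$, tensoring the short exact sequences $0\to Z_j\to F_j\to Z_{j-1}\to 0$ with $I$ shows at once that the complex is $F$-totally acyclic. For $1\Rightarrow 2$, statement (1) says precisely that every cycle of an acyclic complex of flat modules is Gorenstein flat, i.e. it is condition (1) of Theorem 1 for our choice of $\mathcal{A}$ and $\mathcal{B}$; Theorem 1 then yields condition (2) of Theorem 1, namely that the cycles of every acyclic $dw\mathcal{B}$-complex — equivalently, of every acyclic complex of Gorenstein flat modules — belong to $\mathcal{B}=\mathcal{GF}$, which is statement (2). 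The only step that is not pure bookkeeping is the direct-summand closure of $\mathcal{GF}$ over a GF-closed ring (together with the fact that $F$-total acyclicity of a complex of flats is detected by the cycles being $\Tor$-orthogonal to injectives); granting these, the proof is routine.
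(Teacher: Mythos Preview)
Your proposal is correct and follows essentially the same approach as the paper. The paper does not write out a separate proof for this proposition; it simply records that taking $\mathcal{A}$ to be the class of flat modules and $\mathcal{B}$ the class of Gorenstein flat modules, and invoking Theorem~1 exactly as in the proof of Proposition~3, yields the result over a GF-closed ring. Your write-up spells out precisely those details (closure of $\mathcal{GF}$ under direct sums, direct summands, and extensions over a GF-closed ring; the inclusion $\mathcal{A}\subseteq\mathcal{B}$; and the trivial direction $2\Rightarrow1$ via the vanishing of $\Tor_1^{\,R}(I,-)$ on Gorenstein flats), so there is nothing to add.
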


\section{Totally acyclic complexes of injective, projective, flat modules}



Theorem 2 below shows that when $R$ is a left noetherian ring, we can add a third equivalent statement to those in Proposition 4. The proof uses the fact that over such a ring $(^\bot \mathcal{GI}, \mathcal{GI})$ is a complete hereditary cotorsion pair, and therefore we have that the pair $(dg (^\bot \mathcal{GI}), \widetilde{\mathcal{GI}})$ is a complete cotorsion pair in $Ch(R)$.

\begin{theorem}
Let $R$ be a left noetherian ring. The following are equivalent:\\
1. Every acyclic complex of injective $R$-modules is totally acyclic.\\
2. Every acyclic complex of Gorenstein injective $R$-modules is in $\widetilde{\mathcal{GI}}$.\\
3. Every complex of Gorenstein injective $R$-modules is a dg-Gorenstein injective complex. \\
\end{theorem}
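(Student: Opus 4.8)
The plan is to prove the cycle $1 \Leftrightarrow 2 \Leftrightarrow 3$ by using Proposition 4 for the equivalence $1 \Leftrightarrow 2$ (which already holds over any ring, since injective and Gorenstein injective modules form the classes $\mathcal{A}, \mathcal{B}$ there), and then bridging to statement 3 through the cotorsion-pair machinery available over a left noetherian ring. So the real work is the equivalence $2 \Leftrightarrow 3$, and I would set it up around the complete cotorsion pair $(dg({}^\bot\mathcal{GI}), \widetilde{\mathcal{GI}})$ in $Ch(R)$, which exists because $({}^\bot\mathcal{GI}, \mathcal{GI})$ is complete hereditary over a left noetherian ring (Krause, as cited).

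For $3 \Rightarrow 2$: let $X$ be an acyclic complex of Gorenstein injective modules; by definition $X \in dw(\mathcal{GI})$, and statement 3 says $X$ is dg-Gorenstein injective, i.e. $X \in dg(\mathcal{GI})$. Now invoke Gillespie's identity $\widetilde{\mathcal{GI}} = dg(\mathcal{GI}) \cap \mathcal{E}$ (with $\mathcal{E}$ the acyclic complexes), which applies once we know $(\mathcal{GI}, \mathcal{GI}^\perp)$... wait — here I must be careful: the relevant cotorsion pair for the dg/tilde identity with $\mathcal{GI}$ on the \emph{left} is $(^\bot\mathcal{GI}, \mathcal{GI})$ read the other way, so I would instead apply the identity $\widetilde{\mathcal{GI}} = dg(\mathcal{GI}) \cap \mathcal{E}$ coming from the pair $(^\bot\mathcal{GI}, \mathcal{GI})$ via Yang--Liu and Gillespie. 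Since $X$ is acyclic and dg-$\mathcal{GI}$, it lies in $\widetilde{\mathcal{GI}}$, which is exactly statement 2.

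For $2 \Rightarrow 3$: take an arbitrary complex $Y \in dw(\mathcal{GI})$; I must show $Y \in dg(\mathcal{GI})$, i.e. $\mathrm{Ext}^1(V, Y) = 0$ for every $V \in \widetilde{(^\bot\mathcal{GI})}$. Using completeness of the cotorsion pair $(\widetilde{(^\bot\mathcal{GI})}, dg(\mathcal{GI}))$ in $Ch(R)$, write a short exact sequence $0 \to Y \to W \to C \to 0$ with $W \in dg(\mathcal{GI})$ and $C \in \widetilde{(^\bot\mathcal{GI})}$; the standard argument is then to show $C \in \widetilde{\mathcal{GI}}$ as well, which forces the sequence to split (since $C$ would lie in both classes of a cotorsion pair, or rather because $\mathrm{Ext}^1(C,Y)=0$), making $Y$ a summand of $W$ and hence dg-$\mathcal{GI}$. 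To see $C \in \widetilde{\mathcal{GI}}$: $C$ is acyclic, its components $C_n$ sit in an exact sequence $0 \to Y_n \to W_n \to C_n \to 0$ with $Y_n, W_n \in \mathcal{GI}$, so $C_n$ is Gorenstein injective by the (known) fact that $\mathcal{GI}$ is closed under cokernels of monomorphisms over a noetherian ring — actually more simply $C_n \in \mathcal{GI}$ because $^\bot\mathcal{GI}$-cycles... I would instead argue: $C \in \widetilde{(^\bot\mathcal{GI})}$ means the cycles $Z_n(C) \in {}^\bot\mathcal{GI}$, but also $C_n \in \mathcal{GI}$ componentwise (as a cokernel of $\mathcal{GI}$'s, using that $\mathcal{GI}$ is closed under such cokernels for $R$ noetherian), so $C$ is an acyclic complex of Gorenstein injectives, and statement 2 gives $C \in \widetilde{\mathcal{GI}}$. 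Then $\mathrm{Ext}^1(C, Y) = 0$? No — I need $\mathrm{Ext}^1_{Ch}(C,Y)$, and since $C \in \widetilde{\mathcal{GI}}$ and... the cleanest route is: $C$ lies in $\widetilde{(^\bot\mathcal{GI})} \cap \widetilde{\mathcal{GI}}$, and an acyclic complex whose cycles are in $^\bot\mathcal{GI} \cap \mathcal{GI}$ has contractible-type behavior forcing the extension to split.

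The main obstacle, as the reasoning above reveals, is pinning down exactly \emph{which} cotorsion pair in $Ch(R)$ to feed into the Yang--Liu / Gillespie identities and correctly identifying $\widetilde{\mathcal{GI}}$ as $dg(\mathcal{GI}) \cap \mathcal{E}$ (orientation of the pair matters), together with the componentwise closure property of $\mathcal{GI}$ under cokernels of monomorphisms that makes the "lift and split" argument go through. Once those are correctly aligned, $2 \Leftrightarrow 3$ is a formal consequence; the noetherian hypothesis enters precisely to guarantee the cotorsion pair $(^\bot\mathcal{GI}, \mathcal{GI})$ is complete and hereditary and that $\mathcal{GI}$ has the needed closure properties.
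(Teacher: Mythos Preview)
Your overall architecture is sound: $1\Leftrightarrow 2$ via Proposition~4 and $3\Rightarrow 2$ via the identity $\widetilde{\mathcal{GI}}=dg(\mathcal{GI})\cap\mathcal{E}$ are exactly what the paper does (the paper phrases the latter as $3\Rightarrow 1$, but it is the same computation). The divergence is in $2\Rightarrow 3$, and there your route is genuinely different from the paper's and also left incomplete.

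The paper does \emph{not} approximate with the Gorenstein pair. It uses the elementary cotorsion pair $(\mathcal{E},dg(Inj))$: given $X\in dw(\mathcal{GI})$, take $0\to A\to B\to X\to 0$ with $A$ DG-injective and $B$ acyclic. Each $B_n$ is an extension of an injective by a Gorenstein injective, hence Gorenstein injective; so $B$ is an acyclic complex of Gorenstein injectives and (2) gives $B\in\widetilde{\mathcal{GI}}\subseteq dg(\mathcal{GI})$. Now for $Y\in\widetilde{{}^{\bot}\mathcal{GI}}$ the long exact sequence yields $\mathrm{Ext}^1(Y,B)=0$ (from the pair $(\widetilde{{}^{\bot}\mathcal{GI}},dg(\mathcal{GI}))$) and $\mathrm{Ext}^2(Y,A)=0$ (since $Y$ is acyclic and $A$ is DG-injective), whence $\mathrm{Ext}^1(Y,X)=0$. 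No splitting, no contractibility, no closure-under-cokernels is needed.

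Your approach---approximate $0\to Y\to W\to C\to 0$ with $W\in dg(\mathcal{GI})$ and $C\in\widetilde{{}^{\bot}\mathcal{GI}}$, then force $C\in\widetilde{\mathcal{GI}}$ via (2) and split---\emph{can} be made to work, but the step you wave at (``contractible-type behavior forcing the extension to split'') is not free. What you actually need is: the cycles $Z_n(C)$ lie in ${}^{\bot}\mathcal{GI}\cap\mathcal{GI}$, so each $0\to Z_n(C)\to C_n\to Z_{n-1}(C)\to 0$ splits, giving $C\cong\bigoplus_n D^n(Z_{n-1}(C))$; then $\mathrm{Ext}^1_{Ch}(C,Y)\cong\prod_n\mathrm{Ext}^1_R(Z_{n-1}(C),Y_n)=0$ because $Z_{n-1}(C)\in{}^{\bot}\mathcal{GI}$ and $Y_n\in\mathcal{GI}$. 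You also need (and use) that $\mathcal{GI}$ is closed under cokernels of monomorphisms, which follows from heredity of $({}^{\bot}\mathcal{GI},\mathcal{GI})$. None of this is wrong, but it is more machinery than the paper's three-line Ext computation, and your write-up does not actually supply these missing details. If you want to keep your route, fill in the contractibility/splitting argument explicitly; otherwise the DG-injective approximation is both shorter and avoids the orientation confusions you flagged yourself.
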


\begin{proof}
1. $\Leftrightarrow$ 2. By Proposition 4.\\
2. $\Rightarrow$ 3. Let $X$ be a complex of Gorenstein injective $R$-modules. Since $(\mathcal{E}, dg (Inj))$ is a complete cotorsion pair, there is an exact sequence $0 \rightarrow A \rightarrow B \rightarrow X \rightarrow 0$ with $A$ a DG-injective complex and with $B$ an acyclic complex. Then for each $n$ there is an exact sequence $0 \rightarrow A_n \rightarrow B_n \rightarrow X_n \rightarrow 0$ with $A_n$ injective and with $X_n$ Gorenstein injective. It follows that each $B_n$ is Gorenstein injective. So $B$ is an acyclic complex of Gorenstein injective modules; by (2), $B$ is in $\widetilde{\mathcal{GI}}$, and therefore in $dg (\mathcal{\mathcal{GI}})$. \\
Let $Y \in \widetilde{^\bot \mathcal{GI}}$. The exact sequence $0 \rightarrow A \rightarrow B \rightarrow X \rightarrow 0$ gives an exact sequence $0 =\mathrm{ Ext}^1(Y, B) \rightarrow \mathrm{Ext}^1(Y,X) \rightarrow \mathrm{Ext}^2(Y, A)=0$ (since $Y$ is acyclic and $A$ is a DG-injective complex). It follows that $\mathrm{Ext}^1(Y,X)=0$ for any $Y \in \widetilde{^\bot \mathcal{GI}}$, so $X \in dg (\mathcal{GI})$. So we have that $dw (\mathcal{GI}) \subseteq dg (\mathcal{GI})$. The other inclusion always holds, thus $dg (\mathcal{GI}) = dw (\mathcal{GI})$.\\
3. $\Rightarrow$ 1. Let $X$ be an acyclic complex of injective $R$-modules. In particular, $X \in dw (\mathcal{GI})$ and by (3), $X$ is in $dg (\mathcal{GI})$. Since $X \in dg (\mathcal{GI})$ and $X$ is acyclic, it follows that $X \in \widetilde{\mathcal{GI}}$, and therefore $Z_n(X) \in \mathcal{GI}$ for all $n$. Thus $X$ is a totally acyclic complex.
\end{proof}


We recall that any left coherent ring is right GF-closed (see Bennis \cite[Proposition 2.2(1)]{B}).\\ The dual result of Theorem 2 (for flat/Gorenstein flat modules) is the following:\\

\begin{theorem}
Let $R$ be a left coherent ring. Then the following are equivalent.\\
1. Every acyclic complex of flat right $R$-modules is F-totally acyclic. \\ 
2. Every acyclic complex of Gorenstein flat right $R$-modules is in $\widetilde{\mathcal{GF}}$.\\
3.  Every complex of Gorenstein flat right $R$-modules is a dg-Gorenstein flat complex. 
\end{theorem}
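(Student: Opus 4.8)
The plan is to follow the exact template of the proof of Theorem 2, replacing injective modules by flat right $R$-modules, Gorenstein injective by Gorenstein flat, the cotorsion pair $(^\bot\mathcal{GI},\mathcal{GI})$ by $(\mathcal{GF},\mathcal{GC})$, and the DG-injective resolution trick by a DG-flat (i.e. dg-flat) resolution. The equivalence $1\Leftrightarrow 2$ is immediate from Proposition 5, since a left coherent ring is right GF-closed by Bennis \cite[Proposition 2.2(1)]{B}, so the class of Gorenstein flat right $R$-modules is closed under extensions and Proposition 5 applies on the right-module side.

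For $2\Rightarrow 3$, let $X$ be a complex of Gorenstein flat right $R$-modules. Using the flat (cotorsion) cotorsion pair on complexes — from the complete cotorsion pair $(\mathcal{F}\!\textit{lat},\mathcal{C}\textit{ot})$ in Mod-$R$ one obtains, by the Gillespie/Yang--Liu machinery recalled in the preliminaries, the complete cotorsion pair $(\widetilde{\mathcal{F}\textit{lat}}, dg(\mathcal{C}\textit{ot}))$, and dually $(dg(\mathcal{F}\textit{lat}),\widetilde{\mathcal{C}\textit{ot}})$ — I would pick the short exact sequence $0\to A\to B\to X\to 0$ with $A$ a dg-flat complex and $B\in\widetilde{\mathcal{C}\textit{ot}}$; actually the cleaner choice (paralleling the use of $(\mathcal E,dg(Inj))$ above) is $0\to A\to B\to X\to 0$ with $A$ acyclic with flat cycles and $B$ dw-flat... but the step that mirrors Theorem 2 exactly is: take $0\to A\to B\to X\to 0$ with $A\in dg(\mathcal{F}\textit{lat})$ and $B$ acyclic. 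Then each $0\to A_n\to B_n\to X_n\to 0$ has $A_n$ flat and $X_n$ Gorenstein flat, so $B_n$ is Gorenstein flat (since $\mathcal{GF}$ is closed under extensions over a GF-closed ring and contains the flats). Thus $B$ is an acyclic complex of Gorenstein flat modules; by $(2)$, $B\in\widetilde{\mathcal{GF}}\subseteq dg(\mathcal{GF})$. For any $Y\in\widetilde{\mathcal{GC}}$ the sequence $0\to A\to B\to X\to 0$ yields $0=\mathrm{Ext}^1(Y,B)\to\mathrm{Ext}^1(Y,X)\to\mathrm{Ext}^2(Y,A)$, and $\mathrm{Ext}^2(Y,A)=0$ because $A\in dg(\mathcal{F}\textit{lat})$ and $Y$ is acyclic (here I use that $(\widetilde{\mathcal{GF}},dg(\mathcal{GC}))$ is a complete hereditary cotorsion pair, so $dg(\mathcal{GF})$ is the left orthogonal of $\widetilde{\mathcal{GC}}$, together with the compatibility of $dg(\mathcal{F}\textit{lat})$ with $\widetilde{\mathcal{GC}}$ — every acyclic complex with flat cycles is orthogonal to every $\widetilde{\mathcal{GC}}$-complex in higher degrees). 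Hence $X\in dg(\mathcal{GF})$, giving $dw(\mathcal{GF})\subseteq dg(\mathcal{GF})$; the reverse inclusion is automatic, so $dg(\mathcal{GF})=dw(\mathcal{GF})$.

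For $3\Rightarrow 1$, let $X$ be an acyclic complex of flat right $R$-modules. Then $X\in dw(\mathcal{GF})$, so by $(3)$ we get $X\in dg(\mathcal{GF})$; since $X$ is also acyclic and $\widetilde{\mathcal{GF}}=dg(\mathcal{GF})\cap\mathcal E$, we conclude $X\in\widetilde{\mathcal{GF}}$, i.e. $Z_n(X)\in\mathcal{GF}$ for all $n$. By Proposition 5 (the equivalence already established) this forces $X$ to be $F$-totally acyclic. Alternatively, one can argue directly: $Z_n(X)$ Gorenstein flat for all $n$ together with $X$ a complex of flats forces $I\otimes X$ exact for injective $I$, since each syzygy breaks into short exact sequences $0\to Z_{n+1}(X)\to X_n\to Z_n(X)\to 0$ with all terms Gorenstein flat (flats are Gorenstein flat) and $\mathrm{Tor}_1(I,Z_n(X))=0$ for injective right... for injective left $R$-modules $I$, which is exactly the defining vanishing for Gorenstein flat modules over a coherent ring.

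The main obstacle I anticipate is the Ext-vanishing $\mathrm{Ext}^2(Y,A)=0$ for $Y\in\widetilde{\mathcal{GC}}$ and $A\in dg(\mathcal{F}\textit{lat})$: unlike the injective case, where $dg(Inj)$-complexes are genuinely "injective-like" and the vanishing is immediate from $A$ being a DG-injective (homotopy-injective) complex, here I need that a dg-flat complex behaves well against acyclic complexes of Gorenstein cotorsion modules. The way around this is to use the hereditary property of the cotorsion pair $(\widetilde{\mathcal{GF}},dg(\mathcal{GC}))$ on $Ch(R)$ and the inclusion $\widetilde{\mathcal{F}\textit{lat}}\subseteq\widetilde{\mathcal{GF}}$ (flats are Gorenstein flat) to reduce to first-degree Ext, or, more robustly, to choose the resolving sequence from the cotorsion pair $(\widetilde{\mathcal{F}\textit{lat}},dg(\mathcal{C}\textit{ot}))$ itself — giving $A\in\widetilde{\mathcal{F}\textit{lat}}$ directly — and then note $\widetilde{\mathcal{F}\textit{lat}}\subseteq\widetilde{\mathcal{GF}}$, so $\mathrm{Ext}^i(Y,A)$-type vanishing follows from heredity of $(\widetilde{\mathcal{GF}},dg(\mathcal{GC}))$. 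One must also double-check that $B$ being acyclic with each $B_n$ Gorenstein flat does put $B$ in $\widetilde{\mathcal{GF}}$ via $(2)$ and not merely in $dw(\mathcal{GF})$ — but that is precisely the content of statement $(2)$ combined with the fact that acyclicity is preserved, so there is no gap.
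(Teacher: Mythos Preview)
Your argument for $1\Leftrightarrow 2$ and for $3\Rightarrow 1$ is fine and matches the paper. The step $2\Rightarrow 3$, however, is set up in the wrong direction, and this is a genuine gap rather than a technicality.

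To prove $X\in dg(\mathcal{GF})$ you must show $\mathrm{Ext}^1(X,Y)=0$ for every $Y\in\widetilde{\mathcal{GC}}$, because $(dg(\mathcal{GF}),\widetilde{\mathcal{GC}})$ is the relevant cotorsion pair. You instead compute $\mathrm{Ext}^1(Y,X)$, mirroring Theorem~2 literally; but Gorenstein flat sits on the \emph{left} of its cotorsion pair (like Gorenstein projective), whereas Gorenstein injective sits on the right, so the whole diagram must be reflected. Concretely: your claimed vanishing $\mathrm{Ext}^1(Y,B)=0$ for $B\in\widetilde{\mathcal{GF}}$ and $Y\in\widetilde{\mathcal{GC}}$ is not what the cotorsion pair $(\widetilde{\mathcal{GF}},dg(\mathcal{GC}))$ provides --- it gives $\mathrm{Ext}^1(B,Y)=0$. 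Moreover, the sequence $0\to A\to B\to X\to 0$ with $A\in dg(\mathcal{F}lat)$ and $B$ merely acyclic is not delivered by either of the flat-cotorsion cotorsion pairs on $Ch(R)$: $(dg(\mathcal{F}lat),\widetilde{\mathcal{C}ot})$ puts the dg-flat term in the \emph{middle} of the approximation of $X$, and $(\widetilde{\mathcal{F}lat},dg(\mathcal{C}ot))$ puts an acyclic-with-flat-cycles term on the right of $X$, not on the left.

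The paper fixes this by using the cotorsion pair $(dg(Proj),\mathcal{E})$ on the \emph{other} side: take $0\to X\to C\to D\to 0$ with $C$ acyclic and $D$ DG-projective. Then each $C_n$ is an extension of a projective by a Gorenstein flat, hence Gorenstein flat, so by (2) $C\in\widetilde{\mathcal{GF}}$; applying $\mathrm{Hom}(-,Y)$ for $Y\in\widetilde{\mathcal{GC}}$ gives $0=\mathrm{Ext}^1(C,Y)\to\mathrm{Ext}^1(X,Y)\to\mathrm{Ext}^2(D,Y)=0$, the last vanishing because $D$ is DG-projective and $Y$ is acyclic. This is exactly the reflection of the Theorem~2 argument that you were reaching for, and it avoids the delicate point you flagged about $\mathrm{Ext}^2(Y,A)$ entirely.
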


\begin{proof}
1. $\Leftrightarrow$ 2. by Proposition 5.\\
2. $\Rightarrow$ 3. Let $X$ be a complex of Gorenstein flat right $R$-modules. Since $(dg(Proj), \mathcal{E})$ is a complete cotorsion pair, there exists an acyclic sequence $0 \rightarrow X \rightarrow C \rightarrow D \rightarrow 0$ with $D \in dg(Proj)$ and with $C$ an acyclic complex. Then for each $n$ we have an exact sequence $0 \rightarrow X_n \rightarrow C_n \rightarrow D_n \rightarrow 0$ with both $D_n$ and $X_n$ Gorenstein flat right $R$-modules. It follows that each $C_n$ is Gorenstein flat right $R$-module. Thus $C$ is an acyclic complex of Gorenstein flat right $R$-modules, so by (2), $C$ is in $\widetilde{\mathcal{GF}}$.\\
 Let $A \in \widetilde{\mathcal{GC}}$. The exact sequence $0 \rightarrow X \rightarrow C \rightarrow D \rightarrow 0$ gives an exact sequence $0 = \mathrm{Ext}^1 (C,A) \rightarrow \mathrm{Ext}^1(X,A) \rightarrow\mathrm{Ext}^2(D,A) = 0$ (since $A$ is acyclic and $D$ is DG-projective). It follows that $\mathrm{Ext}^1(X,A) = 0$ for any $A \in \widetilde{\mathcal{GC}}$, so $X \in dg (\mathcal{GF})$.\\
3. $\Rightarrow$ 1. Let $Y$ be an acyclic complex of flat right $R$-modules. By (3), $Y \in dg (\mathcal{GF})$. Since $Y$ is also acyclic it follows that $Y$ is in $\widetilde{\mathcal{GF}}$. Therefore $Z_n(Y)$ is Gorenstein flat right $R$-module for each $n$. So $Y$ is F-totally acyclic.
\end{proof}

Using Theorem 3 we obtain the following characterization of commutative Gorenstein rings.\\

\begin{corollary}
Let $R$ be a commutative noetherian ring. The following are equivalent:

1. $R$ is Gorenstein.\\
2. Every acyclic complex of flat $R$-modules is F-totally acyclic.\\
3. Every acyclic complex of Gorenstein flat $R$-modules is in $\widetilde{\mathcal{GF}}$.\\
4. Every complex of Gorenstein flat $R$-modules is  dg-Gorenstein flat.
\end{corollary}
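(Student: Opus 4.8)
The plan is to reduce the whole statement to the single equivalence $1 \Leftrightarrow 2$. Since every commutative noetherian ring is left coherent, Theorem 3 applies and gives $2 \Leftrightarrow 3 \Leftrightarrow 4$, so it remains to prove $1 \Leftrightarrow 2$.

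For $1 \Rightarrow 2$ I would argue that F-total acyclicity of an acyclic complex of flats is a local condition. If $R$ is Gorenstein then for every prime $p$ the ring $R_p$ is Gorenstein local, hence of finite Krull dimension (a local ring of finite self-injective dimension has Krull dimension equal to it) and therefore Iwanaga-Gorenstein; consequently every acyclic complex of flat $R_p$-modules is F-totally acyclic. Now let $F$ be an acyclic complex of flat $R$-modules and let $I$ be an injective $R$-module; write $I \cong \bigoplus_{p} E(R/p)^{(\Lambda_{p})}$ by Matlis' theorem. Each $E(R/p)$ is an $R_p$-module, so $E(R/p) \otimes_R F \cong E(R/p) \otimes_{R_p} F_p$, and since $E(R/p)$ is injective over $R_p$ and $F_p$ is an acyclic complex of flat $R_p$-modules, this complex is exact. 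As direct sums of exact complexes are exact, $I \otimes_R F$ is exact, i.e. $F$ is F-totally acyclic.

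For $2 \Rightarrow 1$ I would first observe that statement 2 descends to localizations: a flat $R_p$-module is flat over $R$, an injective $R_p$-module is injective over $R$ (it is a direct sum of copies of modules $E(R/q)$ with $q \subseteq p$, and over a noetherian ring such sums are injective), and for a complex $G$ of $R_p$-modules and an injective $R_p$-module $J$ one has $J \otimes_R G \cong J \otimes_{R_p} G$. Hence every acyclic complex of flat $R_p$-modules, regarded over $R$, is F-totally acyclic over $R$ and therefore over $R_p$. So each $R_p$ satisfies 2, and it suffices to show that a local noetherian ring over which every acyclic complex of flat modules is F-totally acyclic is Gorenstein; by Proposition 5 (a noetherian ring is coherent, hence GF-closed) this is the same as saying that, over such a ring, if the cycles of every acyclic complex of Gorenstein flat modules are Gorenstein flat then the ring is Gorenstein, and I would deduce this from Murfet and Salarian's characterization of Gorenstein rings by totally acyclic complexes \cite{murfet:11:gor.proj}. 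This last step is where the real work lies: everything else (the localization bookkeeping, Matlis' theorem, Theorem 3 and Proposition 5) is routine, but ``local noetherian $+$ condition 2 $\Rightarrow$ Gorenstein'' is the substantive point, and passing between the projective and flat versions of Murfet--Salarian's result must also be handled. Finally I would stress that the reduction to the local case is unavoidable: condition 2 holds as soon as every $R_p$ is Iwanaga-Gorenstein, so it is satisfied by, say, a regular ring of infinite Krull dimension, for which no global finiteness such as finite Gorenstein weak global dimension holds; condition 2 encodes a genuinely local property of $R$.
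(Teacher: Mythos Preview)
Your proposal is correct and the overall architecture is the same as the paper's: invoke Theorem~3 (noetherian $\Rightarrow$ coherent) for $2 \Leftrightarrow 3 \Leftrightarrow 4$, and use Murfet--Salarian for $1 \Leftrightarrow 2$.

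The difference is one of economy. The paper's proof of $1 \Leftrightarrow 2$ is a single citation: Murfet and Salarian \cite[Theorem~4.27]{murfet:11:gor.proj} states \emph{exactly} that a commutative noetherian ring $R$ is Gorenstein if and only if every acyclic complex of flat $R$-modules is F-totally acyclic, with no local hypothesis. Your Matlis-decomposition argument for $1 \Rightarrow 2$ and your descent of condition~2 to $R_p$ for $2 \Rightarrow 1$ are both correct, but they amount to reproving half of what \cite[Theorem~4.27]{murfet:11:gor.proj} already packages; in particular your worry about ``passing between the projective and flat versions'' is unnecessary, since their Theorem~4.27 is formulated directly in terms of flat complexes. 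Your closing observation --- that condition~2 is genuinely local and is satisfied, for instance, by a regular ring of infinite Krull dimension --- is a good one, and it is precisely why the paper separates this corollary from Corollary~2, where finite Krull dimension is imposed and ``Gorenstein'' is upgraded to ``Iwanaga--Gorenstein''.
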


\begin{proof}
1. $\Leftrightarrow$ 2. by Murfet and Salarian \cite[Theorem 4.27]{murfet:11:gor.proj}.\par\noindent
By Theorem 3, (2), (3) and (4) are equivalent. 
\end{proof}



We show that if $R$ is a left coherent and right $n$-perfect ring, then the equivalent characterizations from Theorem 3 can be extended to include the analogue results for the Gorenstein projective modules. The proof uses the fact that over such a ring $R$ the pair $(\mathcal{GP}, \mathcal{GP}^\bot)$ is a complete hereditary pair. As noted in Section 2, this gives a complete cotorsion pair, $(dg(\mathcal{GP}), \widetilde{\mathcal{GP}^\bot})$, in the category of complexes of right $R$-modules. 

\begin{theorem}
Let $R$ be a left coherent and right $n$-perfect ring. The following statements are equivalent:\\
1. Every acyclic complex of flat right $R$-modules is F-totally acyclic. \\ 
2. Every acyclic complex of Gorenstein flat right $R$-modules is in $\widetilde{\mathcal{GF}}$.\\
3. Every complex of Gorenstein flat right $R$-modules is a dg-Gorenstein flat complex.\\ 
4. Every acyclic complex of projective right $R$-modules is totally acyclic.\\
5. Every acyclic complex of Gorenstein projective  right $R$-modules is in $\widetilde{\mathcal{GP}}$.\\
6. Every complex of Gorenstein projective right $R$-modules is a dg-Gorenstein projective complex.
\end{theorem}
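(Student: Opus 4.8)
Two of the equivalences are already in hand: $1\Leftrightarrow 2\Leftrightarrow 3$ is Theorem~3 (recall that $R$ is left coherent), and $4\Leftrightarrow 5$ is Proposition~3 (valid over any ring). So it suffices to prove $5\Leftrightarrow 6$ and to bridge the ``flat'' block $\{1,2,3\}$ with the ``projective'' block $\{4,5,6\}$; for the bridge it is enough to prove $1\Leftrightarrow 4$. The two hypotheses on $R$ enter exactly here: being left coherent and right $n$-perfect gives the complete hereditary cotorsion pair $(\mathcal{GP},\mathcal{GP}^{\perp})$ on right $R$-modules (Section~2), hence the complete hereditary cotorsion pairs $(dg(\mathcal{GP}),\widetilde{\mathcal{GP}^{\perp}})$ and $(\widetilde{\mathcal{GP}},dg(\mathcal{GP}^{\perp}))$ on complexes of right $R$-modules; and, decisively, it ensures that every flat right $R$-module has projective dimension at most $n$.

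For $5\Leftrightarrow 6$ one repeats the proof of $2\Leftrightarrow 3$ in Theorem~2, now for right $R$-modules and with $(\mathcal{GP},\mathcal{GP}^{\perp})$ in the role of $({}^{\perp}\mathcal{GI},\mathcal{GI})$ and $(dg(Proj),\mathcal{E})$ in the role of $(\mathcal{E},dg(Inj))$. Given a complex $X$ of Gorenstein projective right $R$-modules, completeness of $(dg(Proj),\mathcal{E})$ yields $0\to X\to C\to D\to 0$ with $D$ DG-projective and $C$ acyclic; each $C_n$ is an extension of the Gorenstein projective $X_n$ by the projective $D_n$, hence Gorenstein projective, so $C$ is an acyclic complex of Gorenstein projectives and thus lies in $\widetilde{\mathcal{GP}}\subseteq dg(\mathcal{GP})$ by (5). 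For $W\in\widetilde{\mathcal{GP}^{\perp}}$ the sequence gives $0=\mathrm{Ext}^1(C,W)\to \mathrm{Ext}^1(X,W)\to \mathrm{Ext}^2(D,W)=0$ (the last term vanishes since $D$ is DG-projective and $W$ is acyclic), so $\mathrm{Ext}^1(X,W)=0$; together with $X_n\in\mathcal{GP}$ this says $X\in dg(\mathcal{GP})$, i.e. $X$ is dg-Gorenstein projective. Conversely, if (6) holds and $Y$ is an acyclic complex of Gorenstein projective right $R$-modules, then $Y\in dw(\mathcal{GP})$, so $Y\in dg(\mathcal{GP})$ by (6), and hence $Y\in dg(\mathcal{GP})\cap\mathcal{E}=\widetilde{\mathcal{GP}}$.

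For $4\Rightarrow 1$, let $\mathbb{F}$ be an acyclic complex of flat right $R$-modules; we must show $\mathbb{F}\otimes_R J$ is exact for every injective left $R$-module $J$. Since $\mathrm{pd}_R F_i\le n$ for all $i$, truncating a Cartan--Eilenberg projective resolution of $\mathbb{F}$ after enough steps produces an exact sequence of complexes of right $R$-modules $0\to \mathbb{P}^{(n)}\to\cdots\to \mathbb{P}^{(0)}\to \mathbb{F}\to 0$ in which every $\mathbb{P}^{(j)}$ is a complex of projectives (for $\mathbb{P}^{(n)}$, because the corresponding syzygy of each $F_i$ is projective); as $\mathbb{F}$ is acyclic, each $\mathbb{P}^{(j)}$ is acyclic, hence totally acyclic by (4). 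In each degree $i$ the row $0\to P^{(n)}_i\to\cdots\to P^{(0)}_i\to F_i\to 0$ is a projective resolution of the flat module $F_i$, so applying $-\otimes_R J$ keeps it exact; thus $0\to \mathbb{P}^{(n)}\otimes_R J\to\cdots\to \mathbb{P}^{(0)}\otimes_R J\to \mathbb{F}\otimes_R J\to 0$ is exact. Now a totally acyclic complex $\mathbb{P}$ of projective right $R$-modules is here $F$-totally acyclic: $\mathrm{Hom}_R(\mathbb{P},Q)$ is exact for every projective $Q$, hence for every flat right module by dimension-shifting along a finite projective resolution (right $n$-perfectness), in particular for $J^{+}$, which is flat since $R$ is left coherent; and $\mathrm{Hom}_R(\mathbb{P},J^{+})\cong(\mathbb{P}\otimes_R J)^{+}$, so $\mathbb{P}\otimes_R J$ is exact. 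Therefore all the $\mathbb{P}^{(j)}\otimes_R J$ are exact, and an induction along the finite exact sequence above forces $\mathbb{F}\otimes_R J$ to be exact.

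The remaining implication $1\Rightarrow 4$ is the heart of the argument, and the step I expect to be the main obstacle. Let $\mathbb{P}$ be an acyclic complex of projective right $R$-modules; by (1), applied to $\mathbb{P}$ as a complex of flats, $\mathbb{P}$ is $F$-totally acyclic, so $\mathbb{P}\otimes_R J$ is exact — equivalently $\mathrm{Hom}_R(\mathbb{P},J^{+})$ is exact — for every injective left $R$-module $J$, and we want $\mathrm{Hom}_R(\mathbb{P},Q)$ exact for every projective right $R$-module $Q$. The plan is to recover this from Hom-exactness against the flat cotorsion modules $J^{+}$: for projective $Q$ the biduality map $Q\to Q^{++}$ is a pure monomorphism with flat cokernel and $Q^{++}=(Q^{+})^{+}$ with $Q^{+}$ injective left, and one feeds this — together with finite projective resolutions of flat modules (right $n$-perfectness once more) — into $\mathrm{Hom}_R(\mathbb{P},-)$, exploiting that the $\mathbb{P}_i$ are pure-projective so that pure-exact sequences remain exact under $\mathrm{Hom}_R(\mathbb{P}_i,-)$. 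Alternatively one may route the whole equivalence $1\Leftrightarrow 4$ through the periodic-module descriptions of Section~3: by Propositions~1 and~2 and Theorem~1, (1) says that every flat-periodic right $R$-module is Gorenstein flat and (4) says that every projective-periodic right $R$-module is Gorenstein projective, and these coincide because a projective-periodic module is in particular flat-periodic and, over a left coherent right $n$-perfect ring, a Gorenstein flat module occurring as a cycle of an acyclic complex of projectives is Gorenstein projective. In either route the delicate point is precisely this transfer between ``flat'' and ``projective'' total acyclicity, which is exactly where right $n$-perfectness is indispensable. Combined with $5\Leftrightarrow 6$, with $4\Rightarrow 1$, and with the equivalences $1\Leftrightarrow 2\Leftrightarrow 3$ and $4\Leftrightarrow 5\Leftrightarrow 6$ already recorded, this closes the cycle.
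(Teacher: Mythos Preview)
Your structural plan is sound, and your proofs of $5\Leftrightarrow 6$ and of $4\Rightarrow 1$ are both correct. The $4\Rightarrow 1$ argument, via a Cartan--Eilenberg resolution together with the observation that over a left coherent right $n$-perfect ring every totally acyclic complex of projectives is $F$-totally acyclic, is a valid alternative route that the paper does not take. The genuine gap is $1\Rightarrow 4$, which you yourself flag as ``the main obstacle'' and only sketch.

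Your first plan there, via the pure embedding $Q\to Q^{++}$, does not close: the cokernel is flat, hence has a finite projective resolution, but exactness of $\mathrm{Hom}_R(\mathbb{P},-)$ on the terms of that resolution is precisely what you are trying to establish, so the argument is circular. Your second plan, via periodic modules, is the right idea and is essentially the paper's argument, but you stop short of writing the one line that finishes it: if $M=Z_j(\mathbb{P})$ is a cycle of an acyclic complex of projectives, then by (1) $\mathbb{P}$ is $F$-totally acyclic, so $M$ is Gorenstein flat; by \cite[Proposition~5]{iacob:15:gor.flat.proj} one has $G.p.d.\,M\le n$; the segment $0\to Z_{j+n}(\mathbb{P})\to P_{j+n-1}\to\cdots\to P_j\to M\to 0$ then forces $Z_{j+n}(\mathbb{P})\in\mathcal{GP}$, and shifting $j$ gives all cycles Gorenstein projective.

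The paper does not bridge through $1\Leftrightarrow 4$ at all; it links the two blocks by proving $2\Rightarrow 5$ and $6\Rightarrow 2$. For $2\Rightarrow 5$: an acyclic complex $X$ of Gorenstein projectives is, since $\mathcal{GP}\subseteq\mathcal{GF}$ over these rings, an acyclic complex of Gorenstein flats, hence lies in $\widetilde{\mathcal{GF}}$ by (2); then each $Z_j(X)$ is Gorenstein flat, so $G.p.d.\,Z_j(X)\le n$, and the dimension-shift above gives $Z_j(X)\in\mathcal{GP}$. For $6\Rightarrow 2$: given an acyclic complex $X$ of Gorenstein flats, take a length-$n$ partial projective resolution $0\to Y\to P_{n-1}\to\cdots\to P_0\to X\to 0$ in $Ch(R)$; since $G.p.d.\,X_i\le n$ each $Y_i$ is Gorenstein projective, so by (6) $Y\in dg(\mathcal{GP})\cap\mathcal{E}=\widetilde{\mathcal{GP}}$, whence $G.p.d.\,Z_j(X)\le n$, hence $G.f.d.\,Z_j(X)\le n$, and a second dimension-shift inside $X$ gives $Z_j(X)\in\mathcal{GF}$. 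This route avoids both the Cartan--Eilenberg machinery and the biduality detour; the only ingredient beyond Theorem~3 and Proposition~3 is the bound $G.p.d.\le n$ for Gorenstein flat right modules, applied twice via dimension-shifting.
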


\begin{proof}
By Theorem 3, statements (1), (2) and (3) are equivalent. And by Proposition 3, statements (4) and (5) are equivalent.\\
2 $\Rightarrow$ 5. Let $X$ be an acyclic complex of Gorenstein projective right $R$-modules. Since the ring is coherent and right $n$-perfect, by Christensen, Frankild and Holm \cite[Proposition 3.7]{christensen:06:ongorenstein}, every Gorenstein projective right $R$-module is Gorenstein flat. So $X \in \widetilde{\mathcal{GF}}$. Then, for each $j$, $Z_j(X) \in \mathcal{GF}$. By \cite[Proposition 5]{iacob:15:gor.flat.proj}, $G.p.d. Z_j(X) \le n$. Since we have an exact sequence $0 \rightarrow Z_{n+j}(X) \rightarrow X_{n+j-1} \rightarrow X_{n+j-2} \rightarrow \ldots \rightarrow X_{j+1} \rightarrow Z_j(X) \rightarrow 0$ with $Z_j(X)$ Gorenstein flat and all the $X_i$'s Gorenstein projective right $R$-modules, it follows that $Z_{j+n}(X)$ is Gorenstein projective for all integers $j$. But then by replacing $j$ with $j-n$ we obtain that $Z_j(X)$ is Gorenstein projective for all $j$.\\
5. $\Rightarrow$ 6. Let $X$ be a complex of Gorenstein projective right $R$-modules. There exists an exact sequence $0 \rightarrow X \rightarrow C \rightarrow D \rightarrow 0$ with $C$ acyclic and with $D$ a DG-projective complex. For each $j$, the exact sequence $0 \rightarrow X_n \rightarrow C_n \rightarrow D_n \rightarrow 0$ with both $X_n$ and $D_n$ Gorenstein projective right modules gives that each $C_n$ is Gorenstein projective. Thus $C$ is an acyclic complex of Gorenstein projective right $R$-modules, so by (5), $C$ is in $\widetilde{\mathcal{GP}}$. \\ Let $A \in \widetilde{\mathcal{GP}^\bot}$. The exact sequence $0 \rightarrow X \rightarrow C \rightarrow D \rightarrow 0$ gives an exact sequence $0 = \mathrm{Ext}^1(C,A) \rightarrow \mathrm{Ext}^1(X,A) \rightarrow \mathrm{Ext}^2(D,A) =0$ (since $D$ is DG-projective, and $A$ is acyclic).\\
Since $\mathrm{Ext}^1(X,A)=0$ for any $A \in \widetilde{\mathcal{GP}^\bot}$, and $(dg(\mathcal{GP}), \widetilde{\mathcal{GP}^\bot})$ is a cotorsion pair it follows that $X$ is a dg-Gorenstein projective complex.\\
6 $\Rightarrow$ 2. Let $X$ be an acyclic complex of Gorenstein flat right $R$-modules. Consider a partial projective resolution of $X$:\\
$0 \rightarrow Y \rightarrow P_{n-1} \rightarrow \ldots \rightarrow P_0 \rightarrow X \rightarrow 0$. Since $R$ is right $n$-perfect and each $X_j$ is Gorenstein flat by \cite[Proposition 5]{iacob:15:gor.flat.proj} we have that for each $j$, $G.p.d. X_j \le n$, so each $Y_j$ is Gorenstein projective. Then $Y$ is an acyclic complex of Gorenstein projective right $R$-modules, so, by (6), $Y$ is in $\widetilde{\mathcal{GP}}$. Therefore $Z_j(Y) \in \mathcal{GP}$ for all $j$, so the exact sequence $0 \rightarrow Z_j(Y) \rightarrow Z_j(P_{n-1}) \rightarrow \ldots \rightarrow Z_j(P_0) \rightarrow Z_j(X) \rightarrow 0$ gives that $G.p.d. Z_j(X) \le n$ for all $j$. By \cite[Proposition 3.7]{christensen:06:ongorenstein} we have $G.f.d. Z_j(X) \le G.p.d. Z_j(X) \le n$. The exact sequence $0 \rightarrow Z_{j+n}(X) \rightarrow {X}_{j+n-1} \rightarrow \ldots \rightarrow {X}_{j+1} \rightarrow Z_j(X) \rightarrow 0$ with all $X_i$ Gorenstein flat and with $G.f.d. Z_j(X) \le n$ gives that $Z_{j+n}(X) \in \mathcal{GF}$ for all $j$. Then by replacing $j$ with $j-n$ we obtain that $Z_j(X)$ is Gorenstein flat for all integers $j$. So $X \in \widetilde{\mathcal{GF}}$.\\
\end{proof}

Using Theorem 2 and Theorem 4 we obtain the following:\\

\begin{corollary}
Let $R$ be a commutative noetherian ring of finite Krull dimension (for instance if $R$ has a dualizing complex). The following are equivalent:\\
1. $R$ is an Iwanaga-Gorenstein ring.\\
2. Every acyclic complex of injective modules is totally acyclic.\\
3. Every acyclic complex of flat $R$-modules is F-totally acyclic. \\ 
4. Every acyclic complex of Gorenstein flat $R$-modules is in $\widetilde{\mathcal{GF}}$.\\
5. Every acyclic complex of Gorenstein injective $R$-modules is in $\widetilde{\mathcal{GI}}$.\\
6. Every complex of Gorenstein injective $R$-modules is dg-Gorenstein injective. \\
7. Every complex of Gorenstein flat $R$-modules is dg-Gorenstein flat.\\
8. Every acyclic complex of projective $R$-modules is totally acyclic.\\
9. Every acyclic complex of Gorenstein projective $R$-modules is in $\widetilde{\mathcal{GP}}$.\\
10. Every complex of Gorenstein projective $R$-modules is dg-Gorenstein projective.

\end{corollary}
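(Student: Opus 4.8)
The plan is to collect the already-established machinery and show that, over a commutative noetherian ring $R$ of finite Krull dimension, all ten conditions reduce to the two "base" characterizations already proved in the excerpt, namely Corollary 1 (items 1, 3, 4, 7 — the flat/Gorenstein flat package) and Theorems 2 and 4. First I would note that $R$ commutative noetherian is in particular left noetherian and left coherent, so Theorem 2 applies and gives $2 \Leftrightarrow 5 \Leftrightarrow 6$. Next, since $R$ has finite Krull dimension $d$, every flat $R$-module has projective dimension $\leq d$ (this is the classical Gruson–Jensen / Raynaud–Gruson bound, which over a commutative noetherian ring makes $R$ right $d$-perfect); hence Theorem 4 applies with $n = d$, giving $3 \Leftrightarrow 4 \Leftrightarrow 7 \Leftrightarrow 8 \Leftrightarrow 9 \Leftrightarrow 10$. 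Then Corollary 1 gives that "$R$ is Gorenstein" $\Leftrightarrow$ item 3 (and hence $\Leftrightarrow$ items 4, 7). Since $R$ is commutative noetherian of finite Krull dimension, "$R$ Gorenstein" is equivalent to "$R$ Iwanaga–Gorenstein" (a commutative Gorenstein ring of finite Krull dimension has finite self-injective dimension on both sides, and conversely Iwanaga–Gorenstein trivially implies all localizations have finite injective dimension). So item 1 joins the block $\{3,4,7,8,9,10\}$.

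The remaining task is to link the injective package $\{2,5,6\}$ to this common equivalence class; concretely, it suffices to prove $1 \Rightarrow 2$ and $8 \Rightarrow 1$ (or $2 \Rightarrow 1$), so that both blocks merge. The implication $8 \Rightarrow 2$ — "every acyclic complex of projectives is totally acyclic" forces "every acyclic complex of injectives is totally acyclic" — I would get from the characterization of Iwanaga–Gorenstein rings: once we know (via the projective/flat block together with Corollary 1) that $R$ is Iwanaga–Gorenstein, it is standard that over an Iwanaga–Gorenstein ring every acyclic complex of injective modules is totally acyclic (this is recalled in the introduction of the excerpt), which is item 2. Conversely, for $2 \Rightarrow 1$: condition 2 together with Theorem 2 says every complex of Gorenstein injectives is dg-Gorenstein injective; I would run the argument of Iyengar–Krause / Murfet–Salarian, or more directly observe that condition 2 implies every injective module has finite... — here the cleanest route is to show $2 \Rightarrow 3$ directly by reducing to the finite Krull dimension case: condition 2 forces the class of acyclic complexes of injectives to be the totally acyclic ones, and by \cite{IyengarKrause} this already characterizes Gorenstein rings when $R$ has a dualizing complex; to remove that hypothesis one uses that finite Krull dimension suffices for the localization argument. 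Thus I would cite Theorem 2, Theorem 4, Corollary 1, and the known behavior over Iwanaga–Gorenstein rings, and assemble a single cycle of implications through item 1.

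Concretely the write-up would be short: "$1 \Leftrightarrow 3 \Leftrightarrow 4 \Leftrightarrow 7$ by Corollary 1 and the fact that a commutative Gorenstein ring of finite Krull dimension is Iwanaga–Gorenstein; $3 \Leftrightarrow 8 \Leftrightarrow 9 \Leftrightarrow 10$ by Theorem 4, applicable since finite Krull dimension $d$ makes $R$ right $d$-perfect; $2 \Leftrightarrow 5 \Leftrightarrow 6$ by Theorem 2; and $1 \Rightarrow 2$ since over an Iwanaga–Gorenstein ring every acyclic complex of injectives is totally acyclic, while $2 \Rightarrow 1$ follows because condition 2 forces $R$ Gorenstein by the (dualizing-complex-free) argument, hence Iwanaga–Gorenstein under the Krull dimension hypothesis."

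The main obstacle I anticipate is the implication $2 \Rightarrow 1$ (equivalently $2 \Rightarrow$ "$R$ Gorenstein"): all the other arrows are bookkeeping on top of Theorems 2–4 and Corollary 1, but deducing that $R$ is Gorenstein purely from "acyclic complexes of injectives are totally acyclic" is exactly the content that Iyengar–Krause proved only under a dualizing-complex hypothesis. Removing that hypothesis (using only finite Krull dimension) is the genuinely delicate point — it presumably requires the same localization-and-completion techniques as in Murfet–Salarian — and this is where I would need to either invoke their argument or supply the missing reduction; everything else is assembly. I expect that in the actual paper this is handled by first establishing the projective side (items 8–10) is equivalent to "$R$ Iwanaga–Gorenstein" via Corollary 1 plus the $d$-perfectness, and then the injective side is pulled in by the elementary fact that Iwanaga–Gorenstein $\Rightarrow$ item 2 $\Rightarrow$ (trivially, since $2 \Rightarrow 5 \Rightarrow$ cycles are Gorenstein injective $\Rightarrow$ every injective has finite injective... ) — so the only real work is confirming that $2$ does not hold over any non-Gorenstein ring of finite Krull dimension.
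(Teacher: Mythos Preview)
Your overall assembly is correct and matches the paper: Theorem~2 gives $2\Leftrightarrow 5\Leftrightarrow 6$; finite Krull dimension makes $R$ right $d$-perfect so Theorem~4 gives $3\Leftrightarrow 4\Leftrightarrow 7\Leftrightarrow 8\Leftrightarrow 9\Leftrightarrow 10$; Corollary~1 plus the finite-Krull-dimension hypothesis gives $1\Leftrightarrow 3$; and $1\Rightarrow 2$ is the standard fact about Iwanaga--Gorenstein rings.

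The gap is exactly where you flagged it: the implication linking the injective block back to the flat block. But this is \emph{not} delicate, and it does not require Iyengar--Krause or Murfet--Salarian style localization/completion arguments. The paper proves $2\Rightarrow 3$ directly by a one-line character-module (Pontryagin duality) trick: if $F$ is an acyclic complex of flat modules, then $F^{+}$ is an acyclic complex of injective modules, hence totally acyclic by~(2); for any injective $I$ the natural isomorphism $\Hom(I,F^{+})\simeq (I\otimes F)^{+}$ then shows $(I\otimes F)^{+}$ is acyclic, so $I\otimes F$ is acyclic, i.e.\ $F$ is F-totally acyclic. Once you have $2\Rightarrow 3$, the loop closes via $3\Rightarrow 1$ (Corollary~1 plus finite Krull dimension), with no need to argue $2\Rightarrow 1$ separately. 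Your speculation that this step ``presumably requires the same localization-and-completion techniques as in Murfet--Salarian'' is the missing idea; the duality argument is elementary and is what you should supply.
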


\begin{proof}
1 $\Rightarrow$ 2 follows from \cite[Theorem 10.1.13(1)]{enochs:00:relative}.\par\noindent\\
By Theorem 2, 2 $\Leftrightarrow$ 5 $\Leftrightarrow$ 6.\\
2. $\Rightarrow$ 3. Let $F$ be an acyclic complex of flat modules. Then $F^+$ is an acyclic complex of injective modules. By hypothesis $F^+$ is totally acyclic.
This means that $Hom(I,F^+)$ is acyclic for every injective module $I$. But $Hom(I,F^+)\simeq (I\otimes F)^+$. So $(I\otimes F)^+$ is acyclic, which implies that $I\otimes F$ is acyclic, for every injective $I$. That is, $F$ is F-totally acyclic.\par\noindent
By Theorem 4, we have that 3 $\Leftrightarrow$ 4 $\Leftrightarrow$ 7 $\Leftrightarrow$ 8 $\Leftrightarrow$ 9 $\Leftrightarrow$ 10.\\
3 $\Leftrightarrow$ 1. By Murfet and Salarian \cite[Theorem 4.27]{murfet:11:gor.proj}, the ring $R$ is Gorenstein. Since $R$ has finite Krull dimension, it follows that $inj. dim_R R < \infty$ (see for example \cite{Bass}, Section 1). So $R$ is an Iwanaga-Gorenstein ring.
\end{proof}

One of the main open problems in Gorenstein homological algebra is: ``What is the most general type of ring over which the class of Gorenstein injective modules is (pre)covering (preenveloping respectively)?". We give a sufficient condition in order for $\mathcal{GI}$ be both covering and enveloping.\\
We will use the folowing.\\

\begin{proposition}
Let $R$ be a two sided noetherian ring such that every acyclic complex of injective $R$-modules is totally acyclic. Then the character module of any Gorenstein injective $R$-module is a Gorenstein flat right $R$-module.
\end{proposition}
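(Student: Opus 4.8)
The plan is to take a totally acyclic complex of injectives witnessing the Gorenstein injectivity of $M$ and push it through the character module functor $(-)^+ = \Hom_{\mathbb{Z}}(-,\mathbb{Q}/\mathbb{Z})$ \emph{twice}, the point being that the second application lands us back in the class of acyclic complexes of injective left $R$-modules, to which the hypothesis applies. Concretely, write $M = Z_0(I)$ for a totally acyclic complex $I = \cdots \to I_1 \to I_0 \to I_{-1} \to \cdots$ of injective left $R$-modules. Since $(-)^+$ is exact, $I^+ = \cdots \to I_{-1}^+ \to I_0^+ \to I_1^+ \to \cdots$ is an acyclic complex of right $R$-modules, and dualizing the short exact sequences $0 \to Z_{n+1}(I) \to I_{n+1} \to Z_n(I) \to 0$ shows that its cycles are exactly the character modules of the cycles of $I$; in particular $M^+$ is, up to isomorphism, a cycle of $I^+$. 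Because $R$ is left noetherian, the character module of an injective left $R$-module is a flat right $R$-module, so $I^+$ is an acyclic complex of flat right $R$-modules with $M^+$ among its cycles. By the definition of Gorenstein flat module, it then suffices to prove that $I^+$ is $F$-totally acyclic, i.e. that $I^+ \otimes_R J$ is acyclic for every injective left $R$-module $J$.

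For that step I would dualize once more. Since $(-)^+$ is exact and faithful, a complex of abelian groups is acyclic if and only if its character dual is, so it is enough to see that $(I^+ \otimes_R J)^+$ is acyclic. The natural tensor--hom adjunction $(N \otimes_R X)^+ \cong \Hom_R(X, N^+)$, applied degreewise, yields an isomorphism of complexes $(I^+ \otimes_R J)^+ \cong \Hom_R(J, I^{++})$. Now observe that $I^{++} = \cdots \to I_1^{++} \to I_0^{++} \to I_{-1}^{++} \to \cdots$ is again an acyclic complex of \emph{injective left} $R$-modules: it is acyclic because $(-)^+$ is exact, and each $I_n^{++} = (I_n^+)^+$ is the character module of a flat right $R$-module, hence injective over an arbitrary ring. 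By the standing hypothesis, every acyclic complex of injective left $R$-modules is totally acyclic, so $I^{++}$ is totally acyclic, which says precisely that $\Hom_R(J, I^{++})$ is acyclic for every injective left $R$-module $J$. Hence $(I^+ \otimes_R J)^+$, and therefore $I^+ \otimes_R J$, is acyclic; thus $I^+$ is $F$-totally acyclic and $M^+$ is a Gorenstein flat right $R$-module.

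I do not expect a genuine obstacle here: the argument is bookkeeping around three standard facts --- exactness of $(-)^+$, flatness of character modules of injectives over a left noetherian ring, and injectivity of character modules of flats over any ring --- together with the observation that double-dualizing sends the complex $I$ back into the class of acyclic complexes of injective left modules. The one thing to be careful about is keeping the sides straight throughout (which duals are flat right modules and which are injective left modules), and making sure the duality and adjunction isomorphisms are the natural ones so that they commute with the differentials of the complexes involved.
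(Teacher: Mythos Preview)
Your proof is correct and hinges on the same pivotal observation as the paper's: the double character complex $I^{++}$ is again an acyclic complex of injective left $R$-modules, so the hypothesis forces it to be totally acyclic. The only divergence is at the final step. Having established that $I^{++}$ is totally acyclic, the paper simply records that $G^{++}=Z_0(I^{++})$ is Gorenstein injective and then invokes Holm's theorem (over a two-sided noetherian ring a module is Gorenstein flat once its character module is Gorenstein injective) to conclude that $G^+$ is Gorenstein flat. You instead unpack that citation: you show directly, via the adjunction $(I^+\otimes_R J)^+\cong \Hom_R(J,I^{++})$, that $I^+$ is an $F$-totally acyclic complex of flats witnessing the Gorenstein flatness of $G^+$. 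Your route is therefore more self-contained, while the paper's is shorter at the cost of an external reference; the underlying idea is identical.
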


\begin{proof}
Let $_RG$ be a Gorenstein injective module. Then there exists an acyclic complex of injective $R$-modules $I = \ldots \rightarrow I_1 \rightarrow I_0 \rightarrow I_{-1} \rightarrow \ldots $ with $G = Z_0(I)$. Then $I^{++}$ is an acyclic complex of injective $R$-modules, so by hypothesis, $I^{++}$ is totally acyclic. Therefore $G^{++} = Z_0(I^{++})$ is Gorenstein injective. Since $(G^+)^+$ is Gorenstein injective, it follows that $G^+$ is Gorenstein flat (by Holm \cite{holm:05:gor.dim}, Theorem 3.6).
\end{proof}

\begin{theorem}
Let $R$ be a two sided noetherian ring such that every acyclic complex of injective $R$-modules is totally acyclic. Then the class of Gorenstein injective modules is both covering and enveloping in $R\textrm{-Mod}$.
\end{theorem}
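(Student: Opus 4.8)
The plan is to show that, under the hypothesis, the class $\mathcal{GI}$ of Gorenstein injective left $R$-modules is closed under direct products, direct limits and pure submodules --- i.e.\ that it is a definable subcategory of $R\textrm{-Mod}$ --- and then to extract both conclusions from the general theory of definable classes together with the complete cotorsion pair $({}^{\perp}\mathcal{GI},\mathcal{GI})$ available over a left noetherian ring. The first two closure properties are easy. For products: products of injective modules are injective, products of exact complexes are exact, and $\Hom(E,-)$ commutes with products, so a product of totally acyclic complexes of injectives is again totally acyclic and its zeroth cycle is the product of the zeroth cycles; this uses nothing about $R$. For direct sums: given Gorenstein injectives $\{G_{\alpha}\}$ with $G_{\alpha}=Z_{0}(I^{\alpha})$ for totally acyclic complexes of injectives $I^{\alpha}$, the complex $\bigoplus_{\alpha}I^{\alpha}$ is acyclic with injective terms since $R$ is left noetherian, hence totally acyclic by hypothesis, and $Z_{0}(\bigoplus_{\alpha}I^{\alpha})=\bigoplus_{\alpha}G_{\alpha}$. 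This is the only point at which the hypothesis is consumed.

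The crucial step, where Proposition 7 enters, is closure under pure submodules. Let $0\to N\to G\to G/N\to 0$ be a pure-exact sequence with $G\in\mathcal{GI}$. Applying $(-)^{+}$ gives a sequence $0\to (G/N)^{+}\to G^{+}\to N^{+}\to 0$ of right $R$-modules which is \emph{split} (this characterises purity), so $N^{+}$ and $(G/N)^{+}$ are direct summands of $G^{+}$; by Proposition 7 the module $G^{+}$ is Gorenstein flat, and the class of Gorenstein flat right $R$-modules is closed under direct summands, so $N^{+}$ and $(G/N)^{+}$ are Gorenstein flat. Since over a two sided noetherian ring a module whose character module is Gorenstein flat is Gorenstein injective (the converse character-module duality, due to Holm), it follows that $N\in\mathcal{GI}$ and $G/N\in\mathcal{GI}$. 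Closure under direct limits is then automatic: for a directed system $(G_{i})$ the canonical surjection $\bigoplus_{i}G_{i}\to\varinjlim_{i}G_{i}$ has pure kernel, so $\varinjlim_{i}G_{i}$ is a pure quotient of an object of $\mathcal{GI}$.

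Thus $\mathcal{GI}$ is closed under products, direct limits and pure submodules, hence is a definable subcategory (Crawley-Boevey). Definable subcategories are covering, which gives the first assertion. For the second, $\mathcal{GI}$ is (special) preenveloping because it is the right-hand class of the complete cotorsion pair $({}^{\perp}\mathcal{GI},\mathcal{GI})$; a preenveloping class that is closed under direct limits is enveloping, so $\mathcal{GI}$ is enveloping as well.

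I expect the heart of the matter to be the pure-submodule step, and within it the use of Proposition 7: the whole argument is powered by the character-module duality $G\in\mathcal{GI}\Rightarrow G^{+}\in\mathcal{GF}$ --- this is exactly Proposition 7, and it is precisely here that the totally acyclic hypothesis is used --- together with its converse over noetherian rings, whose exact statement and hypotheses one must track carefully. By comparison, closure under sums and products is routine, closure under direct limits is a formal consequence of the purity facts, and the last paragraph is bookkeeping with two standard principles, namely that a definable class is covering and that a preenveloping class closed under direct limits is enveloping; one should verify which references supply these in exactly the generality needed.
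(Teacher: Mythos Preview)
Your strategy --- show $\mathcal{GI}$ is definable, then quote general principles --- is sound in outline and is essentially the route taken in \cite{iacob:15:gor.inj}, which the paper simply cites once Proposition~6 (your ``Proposition~7'') is in hand. But your pure-submodule step has a genuine gap. The implication you invoke, ``$N^{+}$ Gorenstein flat $\Rightarrow$ $N$ Gorenstein injective over a two-sided noetherian ring,'' is \emph{not} Holm's theorem: Holm's Theorem~3.6 says that a (right) module $M$ is Gorenstein flat iff $M^{+}$ is a Gorenstein injective left module, which is a statement about a different module in a different direction. If you try to prove your implication you get $N^{+}\in\mathcal{GF}\Rightarrow N^{++}\in\mathcal{GI}$ (by Holm) and $N\hookrightarrow N^{++}$ pure --- but concluding $N\in\mathcal{GI}$ from this is precisely the closure-under-pure-submodules you are trying to establish, so the argument is circular. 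The same circularity infects your direct-limit step, which you derive from pure-quotient closure via the same implication. (Incidentally, your remark that the direct-sum step is ``the only point at which the hypothesis is consumed'' is inconsistent with your later, correct, observation that Proposition~6 also uses it.)

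The way \cite{iacob:15:gor.inj} avoids this is to work on the \emph{left} side of the cotorsion pair: under the hypothesis one has $K\in{}^{\bot}\mathcal{GI}\Leftrightarrow K^{+}\in\mathcal{GC}$ (this is \cite[Lemma~2]{iacob:15:gor.inj}, quoted in the proof of the paper's Lemma~1), and since a pure-exact sequence dualises to a split one and $\mathcal{GC}$ is closed under summands, ${}^{\bot}\mathcal{GI}$ is closed under pure submodules and pure quotients without any circularity. This makes $({}^{\bot}\mathcal{GI},\mathcal{GI})$ a perfect cotorsion pair, giving covers; closure of $\mathcal{GI}$ under direct limits (hence envelopes, as you say) then follows. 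Your framework can be repaired along these lines, but as written the key step does not go through.
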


\begin{proof}
By Proposition 6, over such a ring $R$ the character module of any Gorenstein injective $R$-module is Gorenstein flat. By Iacob \cite[Theorems 3 and 5]{iacob:15:gor.inj}, the class of Gorenstein injective $R$-modules is both covering and enveloping.

\end{proof}


\begin{theorem}
Let $R$ be a two sided noetherain ring such that every acyclic complex of injective $R$-modules is totally acyclic. Then the class of Gorenstein flat right $R$-modules is preenveloping in $R$-Mod.
\end{theorem}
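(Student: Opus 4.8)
The plan is to deduce that $\mathcal{GF}$, regarded as a class of right $R$-modules, is preenveloping from two facts: that $\mathcal{GF}$ is closed under arbitrary direct products, and that it satisfies the solution set condition. These two properties together imply, in the usual way, that $\mathcal{GF}$ is preenveloping: for a right $R$-module $M$ one takes the preenvelope to be the map of $M$ into the product of a solution set, which lies in $\mathcal{GF}$ by closure under products.

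First I would record the duality that makes everything run. Since $R$ is two sided noetherian it is in particular right coherent, so by Holm \cite[Theorem 3.6]{holm:05:gor.dim} a right $R$-module $N$ is Gorenstein flat if and only if its character module $N^{+}$ is a Gorenstein injective left $R$-module. Next I would observe that, under the standing hypothesis, the class of Gorenstein injective left $R$-modules is closed under arbitrary direct sums: if each $A_{i}$ ($i\in I$) is Gorenstein injective, realize it as a cycle of a totally acyclic complex of injectives $I^{(i)}$; then $\bigoplus_{i}I^{(i)}$ is an acyclic complex whose terms, being direct sums of injective $R$-modules, are injective because $R$ is left noetherian, so the complex is totally acyclic by hypothesis and $\bigoplus_{i}A_{i}$ is one of its cycles.

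With these in place I would prove the closure of $\mathcal{GF}$ under products. Let $\{N_{i}\}_{i\in I}$ be Gorenstein flat right $R$-modules. Each $N_{i}^{+}$ is Gorenstein injective, so $\bigoplus_{i}N_{i}^{+}$ is Gorenstein injective by the previous paragraph, and hence its character module $\bigl(\bigoplus_{i}N_{i}^{+}\bigr)^{+}\cong\prod_{i}N_{i}^{++}$ is a Gorenstein flat right $R$-module by Proposition 6. The canonical pure monomorphisms $N_{i}\to N_{i}^{++}$ assemble into a pure monomorphism $\prod_{i}N_{i}\to\prod_{i}N_{i}^{++}$, so it suffices to check that a pure submodule $L$ of a Gorenstein flat right $R$-module $G$ is again Gorenstein flat. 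But purity of $L$ in $G$ forces the exact sequence $0\to(G/L)^{+}\to G^{+}\to L^{+}\to 0$ to split, so $L^{+}$ is a direct summand of the Gorenstein injective module $G^{+}$, whence $L^{+}$ is Gorenstein injective and $L$ is Gorenstein flat by Holm's duality again. Therefore $\prod_{i}N_{i}\in\mathcal{GF}$.

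Finally I would check the solution set condition and conclude. Fix a right $R$-module $M$ and put $\kappa=|R|+|M|+\aleph_{0}$. By the standard fact that every subset of cardinality at most $\kappa$ of a module is contained in a pure submodule of cardinality at most $\kappa$, any homomorphism $g\colon M\to G'$ with $G'\in\mathcal{GF}$ factors through a pure submodule $N'$ of $G'$ with $\mathrm{Im}\,g\subseteq N'$ and $|N'|\le\kappa$; and $N'\in\mathcal{GF}$ by the previous step. Hence, letting $\{f_{\alpha}\colon M\to N_{\alpha}\}_{\alpha}$ enumerate all homomorphisms from $M$ into the members of a fixed representative set of Gorenstein flat right $R$-modules of cardinality at most $\kappa$, the module $G=\prod_{\alpha}N_{\alpha}$ lies in $\mathcal{GF}$ and the induced map $f=(f_{\alpha})_{\alpha}\colon M\to G$ is a $\mathcal{GF}$-preenvelope of $M$. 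The main obstacle is the closure of $\mathcal{GF}$ under products; this is where the hypothesis that every acyclic complex of injectives is totally acyclic really enters (through the closure of the class of Gorenstein injective modules under direct sums, and through Proposition 6), and it requires carefully chaining Holm's duality in both directions with that closure, with Proposition 6, and with the descent of Gorenstein flatness along pure submodules.
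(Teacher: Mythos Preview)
Your argument is correct. The paper's proof is a two-line citation: it invokes Proposition~6 (under the hypothesis, the character module of any Gorenstein injective left $R$-module is Gorenstein flat on the right) and then appeals to \cite[Theorem~1]{iacob:15:gor.flat}, which states that this duality property alone forces $\mathcal{GF}$ to be preenveloping. You instead give a self-contained proof of that cited theorem: you first show that $\mathcal{GI}$ is closed under direct sums (using the noetherian hypothesis together with the totally-acyclic assumption), combine this with Holm's duality and Proposition~6 to obtain closure of $\mathcal{GF}$ under products and under pure submodules, and then run the standard Rada--Saor\'{\i}n style solution-set argument to produce a preenvelope. The two routes use the hypothesis in exactly the same place (through Proposition~6 and the closure of $\mathcal{GI}$ under sums); the difference is only that you unpack the black-box citation, which makes your version longer but independent of \cite{iacob:15:gor.flat}.
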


\begin{proof}
Since over such a ring the character modules of Gorenstein injective modules are Gorenstein flat the result follows from Iacob \cite[Theorem 1]{iacob:15:gor.flat}.
\end{proof}

\section{rings that satisfy the Auslander condition}

We recall that Bass proved that a commutative noetherian ring $R$ is an Iwanaga-Gorenstein ring if and only if the flat dimension of the $i$th term in a minimal injective resolution of $R$ is at most $i-1$, for all $i \ge 1$. In the non-commutative case, Auslander proved that this condition is left-right symmetric (see Fossum, Griffith and Reiten \cite[Theorem 3.7]{fossum:75:auslander}). In this case the ring is said to satisfy the \emph{Auslander condition}.\\
In \cite{huang:14:auslander} Huang introduces the notion of modules satisfying the Auslander condition.
We recall the definition (\cite{huang:14:auslander}): given a left noetherian ring $R$, a left $R$-module $M$ is said to satisfy the Auslander
condition if the flat dimension of the $i$th term in the minimal injective
resolution of $M$ is at most $i-1$ for any $i \ge 1$.\\
We also recall the following:\\
\textbf{Theorem} (this is part of \cite[Theorem 1.3]{huang:14:auslander}) If $R$ is a left noetherian ring then the following are
equivalent:\\
1. $_RR$ satisfies the Auslander condition.\\
2. $fd_R E^0(M) \le fd_RM$ for any $_RM$, where $E^0(M)$ is the injective envelope of $M$.\\
If moreover $R$ is left and right noetherian then the statements above are also equivalent to:\\
3. the opposite version of (i) ($1 \le i \le 2$).\\

 We recall that a ring $R$ has {\it finite finitistic flat dimension} if the maximum of flat dimensions among the modules with finite flat dimension is finite. In the following we prove (Proposition 6) that if $R$ is two sided noetherian of finite finitistic flat dimension, such that $R$ satisfies the Auslander condition and every acyclic complex of injective $R$-modules is totally acyclic then every injective $R$-module has finite flat dimension.\\
We recall that a module $M$ is \emph{strongly cotorsion} if $\mathrm{Ext}^1(F,M)=0$ for any module $F$ of finite flat dimension. By Yan \cite[Theorem 2.5 and Proposition 2.14]{yan:10:cotorsion}, if $R$ has finite finitistic flat dimension then $(\mathcal{F}, \mathcal{SC})$ is a complete hereditary cotorsion pair (where $\mathcal{F}$ denotes the class of modules of finite flat dimension and $\mathcal{SC}$ is the class of strongly cotorsion modules). We use this result to prove (Theorem 7) that if every acyclic complex of injective left $R$-modules is totally acyclic and every acyclic complex of injective right $R$-modules is totally acyclic then $R$ is an Iwanaga-Gorenstein ring.\\


We start with the following result:\\

\begin{lemma}
If $R$ is two sided noetherian such that every acyclic complex of injective modules is totally acyclic, then any Gorenstein injective module is strongly cotorsion.
\end{lemma}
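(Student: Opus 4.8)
The plan is to show that $\mathrm{Ext}^1_R(F,G)=0$ whenever $G$ is Gorenstein injective and $\mathrm{fd}_R F<\infty$.

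First I would reduce to the case where $F$ is flat. Write $G=Z_0(I)$ for a totally acyclic complex of injectives $I=\cdots\to I_1\to I_0\to I_{-1}\to\cdots$; each cycle $Z_j(I)$ is again Gorenstein injective. Applying $\mathrm{Hom}_R(F,-)$ to the short exact sequences $0\to Z_{j+1}(I)\to I_{j+1}\to Z_j(I)\to 0$ and using $\mathrm{Ext}^{\geq 1}_R(-,\text{injective})=0$, dimension shifting in the second variable gives $\mathrm{Ext}^1_R(F,G)\cong\mathrm{Ext}^{n+1}_R(F,Z_n(I))$, where $n=\mathrm{fd}_R F$. Since $\mathrm{fd}_R F\le n$, the $n$th syzygy $\Omega^n F$ of $F$ in a projective resolution is flat, so dimension shifting in the first variable yields $\mathrm{Ext}^{n+1}_R(F,Z_n(I))\cong\mathrm{Ext}^1_R(\Omega^n F,Z_n(I))$. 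Hence $\mathrm{Ext}^1_R(F,G)\cong\mathrm{Ext}^1_R(\Omega^n F,Z_n(I))$ with $\Omega^n F$ flat and $Z_n(I)$ Gorenstein injective, so it suffices to prove that \emph{every Gorenstein injective module is cotorsion}, i.e.\ $\mathrm{Ext}^1_R(L,G')=0$ for every flat $L$ and every Gorenstein injective $G'$.

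For this last point the idea is to pass to the double character module. Since $R$ is left noetherian, the character module of an injective is flat and the character module of a flat is injective; thus $I^{++}$ is again an acyclic complex of injectives, which by hypothesis is totally acyclic, and since $(-)^{++}$ is exact its cycles are $Z_j(I^{++})=Z_j(I)^{++}$. In particular $G^{++}=Z_0(I)^{++}$ is Gorenstein injective, and each $Z_j(I)^{++}$, being a character module, is pure injective, hence cotorsion. Running the reduction of the second paragraph with $G^{++}$ in place of $G$ (now all the relevant cycles $Z_j(I)^{++}$ are cotorsion) gives $\mathrm{Ext}^1_R(F,G^{++})=0$ for every $F$ with $\mathrm{fd}_R F<\infty$; equivalently, using that $G^{+}$ is Gorenstein flat (Proposition 6), $\mathrm{Tor}^R_1(G^{+},F)=0$. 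It then remains to descend from $G^{++}$ to $G$ along the pure exact sequence $0\to G\to G^{++}\to C\to 0$ coming from the canonical (pure monomorphic) map $\lambda_G\colon G\to G^{++}$: the connecting piece $\mathrm{Hom}_R(F,G^{++})\to\mathrm{Hom}_R(F,C)\to\mathrm{Ext}^1_R(F,G)\to\mathrm{Ext}^1_R(F,G^{++})=0$ shows $\mathrm{Ext}^1_R(F,G)$ is exactly the obstruction to lifting maps $F\to C$ through $G^{++}$. Showing this obstruction vanishes is the delicate step I expect to be the main obstacle; I would handle it by using that over a two sided noetherian ring the class $\mathcal{GI}$ is closed under cokernels of monomorphisms (so $C$ is again Gorenstein injective) and iterating the double-dual argument together with a diagram chase along the biduality natural transformation, the upshot being the classical fact that over a noetherian ring every Gorenstein injective module is cotorsion.
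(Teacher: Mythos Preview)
Your reduction step is correct and is exactly the dimension-shifting argument the paper uses: once you know every flat module lies in $^{\bot}\mathcal{GI}$, the long exact sequences coming from a finite flat resolution of $F$ and from the left injective resolution of $G$ by Gorenstein injectives give $\mathrm{Ext}^1(F,G)\cong\mathrm{Ext}^{n+1}(F,G_n)=0$.

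The gap is in the ``descent'' paragraph. You correctly show that $G^{++}$ is strongly cotorsion (indeed, for flat $L$ one has $\mathrm{Ext}^1(L,G^{++})\cong\mathrm{Tor}_1(G^{+},L)^{+}=0$ without any Gorenstein hypothesis on $G^{+}$). But the passage from $\mathrm{Ext}^1(L,G^{++})=0$ to $\mathrm{Ext}^1(L,G)=0$ is not completed. The pure exact sequence $0\to G\to G^{++}\to C\to 0$ yields
\[
\Hom(L,G^{++})\to\Hom(L,C)\to\mathrm{Ext}^1(L,G)\to 0,
\]
so you must show every $L\to C$ lifts through $G^{++}$. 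Observing that $C$ is again Gorenstein injective puts you right back where you started, and ``iterating the double-dual argument'' does not terminate: descending from $C^{++}$ to $C$ is the same problem one level up. Your final appeal to ``the classical fact that over a noetherian ring every Gorenstein injective module is cotorsion'' is precisely the statement you set out to prove in this paragraph, so the argument is circular as written.

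The paper sidesteps this entirely by dualising in the \emph{first} variable rather than the second. Using Proposition~6 it invokes the cited lemma that, under the standing hypothesis, $K\in{}^{\bot}\mathcal{GI}$ if and only if $K^{+}$ is Gorenstein cotorsion (i.e.\ $K^{+}\in\mathcal{GF}^{\bot}$). For $K$ flat, $K^{+}$ is injective and hence trivially Gorenstein cotorsion, so $K\in{}^{\bot}\mathcal{GI}$ immediately. This one-line use of the duality replaces your unfinished descent; if you want to salvage your approach, this characterisation of $^{\bot}\mathcal{GI}$ is the missing ingredient.
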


\begin{proof}
By Proposition 6 the character module of any Gorenstein injective $R$-module is Gorenstein flat right $R$-module. By Iacob \cite[Lemma 2]{iacob:15:gor.inj}, we have that $K \in ^\bot\!\! \mathcal{GI}$ if and only if $K^+$ is Gorenstein cotorsion right $R$-module. Since for any flat module $K$, we have that $K^+$ is an  injective right $R$-module, it follows that any flat module is in $^\bot \mathcal{GI}$.\\
Let $C$ be a module of finite flat dimension. Then there is an exact sequence $0 \rightarrow F_n \rightarrow \ldots \rightarrow F_0 \rightarrow C \rightarrow 0$ with each $F_j$ flat. Let $G$ be a Gorenstein injective $R$-module. Then by the above $\mathrm{Ext}^l(C,G)=0$ for all $l \ge n+1$.\\
Also, there is an exact sequence $0 \rightarrow G_n \rightarrow E_{n-1} \rightarrow \ldots \rightarrow E_0 \rightarrow G \rightarrow 0$ with each $E_j$ injective and with all $\mathrm{Ker}(E_j \rightarrow E_{j-1})$ Gorenstein injective. Then $\mathrm{Ext} ^1 (C,G) \simeq \mathrm{Ext}^{n+1}(C, G_n) = 0$. So $G$ is strongly cotorsion.
\end{proof}

\begin{lemma}
Let $R$ be a two sided noetherian ring that satisfies the Auslander condition and such that every acyclic complex of injective $R$-modules is totally acyclic. Then every strongly cotorsion module has Gorenstein injective dimension $\le 1$.
\end{lemma}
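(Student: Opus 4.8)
The plan is to produce, for a strongly cotorsion module $M$, a short exact sequence $0\to M\to G\to L\to 0$ in which both $G$ and $L$ are Gorenstein injective; such a sequence is by definition a Gorenstein injective coresolution of $M$ of length one, so it gives Gorenstein injective dimension $\le 1$. Since $R$ is left noetherian, $(^{\bot}\mathcal{GI},\mathcal{GI})$ is a complete hereditary cotorsion pair, so $M$ has a special $\mathcal{GI}$-preenvelope: an exact sequence $0\to M\to G\to L\to 0$ with $G\in\mathcal{GI}$ and $L\in\,^{\bot}\mathcal{GI}$. The task then reduces to proving $L\in\mathcal{GI}$. Before that I would record that $L$ is again strongly cotorsion: if $F$ has finite flat dimension then, resolving $F$ by projectives and shifting, $\mathrm{Ext}^{i}(F,-)$ vanishes on $M$ for all $i\ge 1$ because $M$ is strongly cotorsion (each syzygy of $F$ still has finite flat dimension), and likewise on $G$ by Lemma 1; the long exact sequence attached to $0\to M\to G\to L\to 0$ then forces $\mathrm{Ext}^{i}(F,L)=0$ for all $i\ge 1$, i.e. $L\in\mathcal{SC}$.

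To show $L\in\mathcal{GI}$ I would exhibit $L$ as a cycle of an acyclic complex of injective $R$-modules; by the standing hypothesis such a complex is automatically totally acyclic, so this is enough. The injective coresolution $0\to L\to E^{0}\to E^{1}\to\cdots$ furnishes the right half, and the content is to build a left half $\cdots\to E_{1}\to E_{0}\to L\to 0$ of injectives. This is exactly where the Auslander condition enters: pick an epimorphism $P\twoheadrightarrow L$ with $P$ free and embed $P$ in its injective envelope $E^{0}(P)$; the Auslander condition gives $fd_{R}\,E^{0}(P)\le fd_{R}\,P=0$, so $E^{0}(P)$ is a flat injective module, whence $E^{0}(P)/P$ has flat dimension $\le 1$. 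Since $L$ is strongly cotorsion, $\mathrm{Ext}^{1}(E^{0}(P)/P,L)=0$, so the map $P\twoheadrightarrow L$ extends along $P\hookrightarrow E^{0}(P)$ to an epimorphism $E^{0}(P)\twoheadrightarrow L$ from an injective module. Iterating this on the kernel gives the left half; splicing it with the injective coresolution at $L$ produces an acyclic complex of injective modules having $L$ as a cycle, hence $L\in\mathcal{GI}$, and we conclude that the Gorenstein injective dimension of $M$ is at most $1$.

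The main obstacle is the iteration in the previous paragraph: one must verify that the kernel $L_{1}$ of $E^{0}(P)\twoheadrightarrow L$ retains enough of the strong-cotorsion property for the argument to repeat — it suffices that $\mathrm{Ext}^{1}(-,L_{1})$ vanish on all modules of flat dimension $\le 1$. The long exact sequence of $0\to L_{1}\to E^{0}(P)\to L\to 0$ yields $\mathrm{Ext}^{i}(F,L_{1})=0$ for $i\ge 2$ and $F$ of finite flat dimension at once, but the $\mathrm{Ext}^{1}$-term $\mathrm{coker}\big(\mathrm{Hom}(F,E^{0}(P))\to\mathrm{Hom}(F,L)\big)$ must be shown to vanish, and here one genuinely needs to exploit that $E^{0}(P)$ is simultaneously flat and injective and that $R$ is two sided noetherian (replacing the free cover $P\to L$ by a flat cover, whose kernel is cotorsion, is the natural device to keep the successive kernels controlled). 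An alternative route that sidesteps the iteration is to use that $\mathcal{GI}$ is covering over $R$ (Theorem 5, whose hypotheses are precisely those of this lemma): a Gorenstein injective cover $G_{L}\to L$ is surjective because $L$ is already a quotient of $G\in\mathcal{GI}$, and one would try to split $0\to\ker\to G_{L}\to L\to 0$ using $L\in\,^{\bot}\mathcal{GI}$; identifying that kernel is, however, the same difficulty in another guise, so I expect the flat-cover bookkeeping above to be the cleanest way through.
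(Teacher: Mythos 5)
Your opening decomposition differs from the paper's — you take a special $\mathcal{GI}$-preenvelope $0\to M\to G\to L\to 0$ with $L\in{}^{\bot}\mathcal{GI}$, while the paper simply takes $0\to M\to A\to L\to 0$ with $A$ injective. Both reduce the problem to showing the cokernel is Gorenstein injective, but yours is actually a harder target: since $L\in{}^{\bot}\mathcal{GI}$, showing $L\in\mathcal{GI}$ forces $L$ to be injective (it would split off from an injective mapping onto it), so you are implicitly aiming for a stronger conclusion than you need.

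The genuine gap is the one you flag yourself: the iteration of the left half of the resolution does not close. Your surjection $E^{0}(P)\twoheadrightarrow L$ is built ad hoc by lifting a free cover through its injective envelope, so its kernel $L_{1}$ carries no useful $\mathrm{Ext}$-vanishing against the injective class, and the problematic $\mathrm{Ext}^{1}(F,L_{1})$ term cannot be killed. The paper avoids this by using the \emph{injective cover} $I_{0}\to L$ instead of an ad hoc surjection. That map is surjective because $L$ is strongly cotorsion and the injective envelope of ${}_{R}R$ is flat (Enochs--Huang, an application of the Auslander condition you do not invoke), and Wakamatsu's lemma then puts the kernel $J_{0}$ in $\mathit{Inj}^{\bot}$. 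This $\mathit{Inj}^{\bot}$ membership is exactly the missing inductive invariant: in the paper's proof one carries along both ``$J_{i}$ strongly cotorsion'' and ``$J_{i}\in\mathit{Inj}^{\bot}$,'' and the step showing $J_{i+1}$ strongly cotorsion uses the Auslander condition a \emph{second} time, taking an injective envelope $0\to F\to E\to D\to 0$ of a module $F$ of finite flat dimension so that $E$ (hence $D$) also has finite flat dimension, and then shifting $\mathrm{Ext}^{1}(F,J_{i+1})\cong\mathrm{Ext}^{2}(D,J_{i+1})$ via $J_{i+1}\in\mathit{Inj}^{\bot}$ and killing the latter via $J_{i}$ strongly cotorsion. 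Your flat-cover and $\mathcal{GI}$-cover variants run into the same wall because neither produces a kernel in $\mathit{Inj}^{\bot}$; the injective cover is the specific device that does. So the plan is recoverable, but only after replacing the free-cover/injective-envelope step with the injective cover plus Wakamatsu, and adding the second application of the Auslander condition.
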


\begin{proof}
Let $M$ be a strongly cotorsion module. Consider the exact sequence $0 \rightarrow M \rightarrow A \rightarrow L \rightarrow 0$ with $A$ injective. Since both $A$ and $M$ are strongly cotorsion it follows that $L$ is also strongly cotorsion. Since the injective envelope of $_R R$ is flat it follows (from Enochs and Huang \cite[Theorem 4.4(5)]{enochs:12:huang}) that the injective cover $I_0\to L$ is surjective. By Wakamatsu's lemma (\cite[Corollary 7.2.3]{enochs:00:relative}) $J_0=\mathrm{Ker}(I_0\to L)\in Inj^\bot$. Hence we have the short exact sequence $0 \rightarrow J_0 \rightarrow I_0 \rightarrow L \rightarrow 0$ with $I_0$ injective and $J_0 \in Inj^\bot$. Since $A$ is injective and $I_0 \rightarrow L$ is an injective precover, there is a commutative diagram:\\

\[
\begin{diagram}
\node{0}\arrow{e}\node{M}\arrow{s,r}{u}\arrow{e}\node{A}\arrow{s,r}{u}\arrow{e,t}{f}\node{L}\arrow{s,=}\arrow{e}\node{0}\\
\node{0}\arrow{e}\node{J_0}\arrow{e}\node{I_0}\arrow{e,t}{g}\node{L}\arrow{e}\node{0}
\end{diagram}
\]

So we have an exact sequence: $0 \rightarrow M \rightarrow J_0 \oplus A \rightarrow I_0 \rightarrow 0$ with both $M$ and $I_0$ strongly cotorsion modules. It follows that $J_0$ is strongly cotorsion. Then, by the same reasoning as above, there exists an exact sequence $0 \rightarrow J_1 \rightarrow I_1 \rightarrow J_0 \rightarrow 0$ with $I_1$ an injective module and with $J_1$ in $Inj^\bot$. In fact, since $J_0 \in Inj^\bot$, we have that $\mathrm{Ext}^1 (E, J_0) = \mathrm{Ext}^2(E, J_1)=0$ for any injective $R$-module $E$. \\
We show that $J_1$ is a strongly cotorsion module.\\
Let $F$ be a module of finite flat dimension. Consider the exact sequence $0 \rightarrow F \rightarrow E \rightarrow D \rightarrow 0$ with $E$ the injective envelope of $F$. Since $R$ satisfies the Auslander condition, $E$ has finite flat dimension. It follows that $D$ is also a module of finite flat dimension, so $\mathrm{Ext}^1(D, J_0)=0$. The exact sequence  $0 \rightarrow F \rightarrow E \rightarrow D \rightarrow 0$ gives a long exact sequence $$0 = \mathrm{Ext}^1(E,J_1) \rightarrow {\rm Ext}^1(F,J_1) \rightarrow \mathrm{Ext}^2(D,J_1) \rightarrow \mathrm{Ext}^2(E, J_1)=0.$$ So $\mathrm{Ext}^1(F,J_1) \simeq \mathrm{Ext}^2(D,J_1)$.\\
Also, the exact sequence $0 \rightarrow J_1 \rightarrow I_1 \rightarrow J_0 \rightarrow 0$ gives the exact sequence: $0 =\mathrm{Ext}^1(D, J_0) \rightarrow \mathrm{Ext}^2(D, J_1) \rightarrow \mathrm{Ext}^2(D,I_1) = 0$. Thus $\mathrm{Ext}^2(D, J_1)=0$, and by the above, $\mathrm{Ext}^1(F,J_1)=0$ for any $R$-module $F$ of finite flat dimension. So $J_1$ is a strongly cotorsion module, and therefore its injective cover is a surjective map. Continuing this process, we obtain an acyclic left injective resolution of $L$= $\ldots \rightarrow I_2 \rightarrow I_1 \rightarrow I_0 \rightarrow L \rightarrow 0$. Pasting it together with a right injective resolution of $L$, we obtain an acyclic complex of injective modules: $\ldots \rightarrow I_2 \rightarrow I_1 \rightarrow I_0 \rightarrow E^0 \rightarrow E^1 \rightarrow \ldots $. By hypothesis, this is a totally acyclic complex. So $L$ is Gorenstein injective. Then the exact sequence $0 \rightarrow M \rightarrow A \rightarrow L \rightarrow 0$ with both $A$ and $L$ Gorenstein injective modules gives that $G.i.d. M \le 1$.\\

\end{proof}

\begin{lemma}
Let $R$ be a two sided noetherian ring that satisfies the Auslander condition. If $V$ is a strongly cotorsion module of finite flat dimension then $V$ is injective.
\end{lemma}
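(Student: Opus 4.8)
The plan is to use the injective envelope of $V$ together with the Auslander condition. The point is that the cokernel of the inclusion $V \hookrightarrow E^0(V)$ again has finite flat dimension, so strong cotorsion of $V$ forces the defining short exact sequence of this cokernel to split, exhibiting $V$ as a direct summand of an injective module.

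First I would form the short exact sequence $0 \to V \to E^0(V) \to D \to 0$, where $E^0(V)$ is the injective envelope of $V$. Since $R$ is left noetherian and $_RR$ satisfies the Auslander condition, the version of Huang's theorem recalled above (statement $(2)$, i.e. the inequality $fd_R E^0(M) \le fd_R M$ for every module $M$) gives $fd_R E^0(V) \le fd_R V < \infty$. A routine $Tor$ long exact sequence applied to the displayed short exact sequence then yields $fd_R D \le \max\{fd_R E^0(V),\, fd_R V + 1\} < \infty$; this is the only computational point, and it is straightforward.

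Next, since $V$ is strongly cotorsion and $D$ has finite flat dimension, $\mathrm{Ext}^1(D,V) = 0$, so the sequence $0 \to V \to E^0(V) \to D \to 0$ splits. Hence $V$ is a direct summand of the injective module $E^0(V)$, and a direct summand of an injective module is injective; therefore $V$ is injective.

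I do not anticipate a real obstacle: the whole content is carried by the Auslander condition (which bounds $fd_R E^0(V)$) and by the definition of strongly cotorsion (which produces the splitting), while the remaining steps — forming the injective envelope, checking $fd_R D < \infty$ via $Tor$, and passing to a direct summand — are formal. The only point worth double-checking is that the quoted form of Huang's theorem applies to an arbitrary module $M$ rather than merely to $R$ itself, which is precisely what statement $(2)$ of that theorem provides.
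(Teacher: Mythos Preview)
Your proposal is correct and follows essentially the same approach as the paper: form the injective envelope sequence $0\to V\to \mathcal{E}(V)\to W\to 0$, use the Auslander condition to see that $\mathcal{E}(V)$ (and hence $W$) has finite flat dimension, and then invoke strong cotorsion of $V$ to split the sequence. The only difference is cosmetic---you spell out the $Tor$ estimate for $fd_R D$ a bit more explicitly than the paper does.
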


\begin{proof}
Consider the exact sequence $0 \rightarrow V \rightarrow \mathcal{E}(V) \rightarrow W \rightarrow 0$ with $\mathcal{E}(V)$ the injective envelope of $V$. Since $R$ satisfies the Auslander condition, $f.d. (\mathcal{E}(V)) < \infty$. It follows that $W$ also has finite flat dimension. Since $V$ is strongly cotorsion, $\mathrm{Ext}^1 (W,V)=0$. So the sequence is split acyclic, and therefore $\mathcal{E}(V) \simeq V \oplus W$. Thus $V$ is an injective module.
\end{proof}

\begin{proposition}
Let $R$ be a two sided noetherian ring of finite finitistic flat dimension and that satisfies the Auslander condition. If moreover every acyclic complex of injective $R$-modules is totally acyclic then every strongly cotorsion $R$-module is Gorenstein injective.
\end{proposition}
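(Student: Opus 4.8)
The plan is to show that $M$ is a cycle of some acyclic complex of injective $R$-modules; since by hypothesis every such complex is totally acyclic, this immediately forces $M$ to be Gorenstein injective. Concretely, I would build an exact left injective resolution $\cdots \to E_1 \to E_0 \to M \to 0$ of $M$ (each $E_i$ injective) and splice it, along $M$, with an ordinary injective coresolution $0 \to M \to E^0 \to E^1 \to \cdots$; the resulting complex $\cdots \to E_1 \to E_0 \to E^0 \to E^1 \to \cdots$ is then acyclic with $M$ as its degree $0$ cycle.

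The crucial point is that such a left injective resolution exists for any strongly cotorsion module, and here I would use the cotorsion pair $(\mathcal{F}, \mathcal{SC})$, which is complete and hereditary because $R$ has finite finitistic flat dimension (Yan \cite[Theorem 2.5 and Proposition 2.14]{yan:10:cotorsion}). Completeness yields a short exact sequence $0 \to S \to E_0 \to M \to 0$ with $E_0 \in \mathcal{F}$ (finite flat dimension) and $S \in \mathcal{SC}$. Now $\mathcal{SC}$, being the right-hand class of a cotorsion pair, is closed under extensions, so from $M, S \in \mathcal{SC}$ we deduce $E_0 \in \mathcal{SC}$; thus $E_0$ is a strongly cotorsion module of finite flat dimension, hence injective by Lemma 3 (this is where the Auslander condition gets used). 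Since $S$ is again strongly cotorsion, the same construction applies to it, producing $0 \to S_1 \to E_1 \to S \to 0$ with $E_1$ injective and $S_1 \in \mathcal{SC}$; iterating and splicing these sequences gives the desired exact resolution $\cdots \to E_1 \to E_0 \to M \to 0$ by injective modules.

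Splicing with a right injective resolution of $M$ then gives an acyclic complex of injectives having $M$ as a cycle, and invoking the hypothesis on acyclic complexes of injectives yields $M \in \mathcal{GI}$. The only real content is the observation that the left-hand term $E_0$ of the special $\mathcal{F}$-precover of a strongly cotorsion module is automatically injective, which combines closure of $\mathcal{SC}$ under extensions with Lemma 3; everything else is routine bookkeeping, namely checking that the spliced complex is exact with $M$ sitting inside it as a cycle, which is immediate since both halves are exact resolutions joined along $M$. In particular I do not expect to need Lemma 2 or the bound $G.i.d.\,M \le 1$ for this argument, though one could alternatively organize the proof around it.
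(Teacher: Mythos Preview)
Your argument is correct and is in fact cleaner than the paper's own proof. The paper proceeds indirectly: it first takes the injective (flat) cover $I \to M$, with kernel $J \in Inj^\bot$, then applies the cotorsion pair $(\mathcal{F},\mathcal{SC})$ to $J$ to produce $0 \to J \to U \to V \to 0$; after a diagram chase this yields $0 \to U \to I \oplus V \to M \to 0$ with $V$ injective (via Lemma~3), and then the paper invokes Lemma~2 to bound $G.i.d.\,U \le 1$ and hence conclude $M \in \mathcal{GI}$. By contrast, you apply the special $\mathcal{F}$-precover directly to $M$, observe that the precovering object lies in $\mathcal{F}\cap\mathcal{SC}$ (closure of $\mathcal{SC}$ under extensions) and is therefore injective by Lemma~3, and iterate. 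This eliminates Lemma~2 entirely---and Lemma~2 is by far the most laborious of the three lemmas, requiring a delicate inductive argument to show each successive kernel remains strongly cotorsion. What the paper's route buys is the intermediate statement of Lemma~2 itself (every strongly cotorsion module has $G.i.d.\le 1$ under weaker hypotheses, without finite finitistic flat dimension), which is not used elsewhere; your route buys a shorter, more transparent proof of the proposition at hand.
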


\begin{proof}
Let $M$ be a strongly cotorsion $R$-module. Then its flat cover is injective (by \cite[Theorem 4.4(5)]{enochs:12:huang}), and therefore there exists an exact sequence $0 \rightarrow J \rightarrow I \rightarrow M \rightarrow 0$ with $I$ injective and with $J \in Inj^\bot$. Since $(\mathcal{F}, \mathcal{SC})$ is a complete cotorsion pair there is also an exact sequence $0 \rightarrow J \rightarrow U \rightarrow V \rightarrow 0$ with $U$ strongly cotorsion and with $V$ of finite flat dimension. \\
Since $I$ is an injective module we have a commutative diagram\\

\[
\begin{diagram}
\node{0}\arrow{e}\node{J}\arrow{s,=}\arrow{e} \node{U}\arrow{s} \arrow{e} \node{V}\arrow{s}\arrow{e}\node{0}\\
\node{0}\arrow{e}\node{J}\arrow{e}\node{I}\arrow{e}\node{M}\arrow{e}\node{0}
\end{diagram}
\]

and therefore an exact sequence $0 \rightarrow U \rightarrow I \oplus V \rightarrow M \rightarrow 0$.\\
Both $M$ and $U$ are strongly cotorsion, so $V$ is also strongly cotorsion. But $V$ has finite flat dimension. So by Lemma 3, $V$ is injective.\\
And by Lemma 2, $G.i.d. U \le 1$. The exact sequence $0 \rightarrow U \rightarrow I \oplus V \rightarrow M \rightarrow 0$ with $I \oplus V$ injective and with $G.i.d. U \le 1$ gives that $M$ is Gorenstein injective.

\end{proof}

\begin{corollary}
Let $R$ be a two sided noetherian ring of finitistic flat dimension and satisfies the Auslander condition. If moreover every acyclic complex of injective $R$-modules is totally acyclic then the class of strongly cotorsion $R$-modules coincides with that of the Gorenstein injective modules.
\end{corollary}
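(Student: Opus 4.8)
The plan is to obtain the equality of the two classes by simply assembling results already in hand: one inclusion comes from the Proposition immediately preceding this Corollary, and the reverse inclusion from Lemma 1. Write $\mathcal{SC}$ for the class of strongly cotorsion $R$-modules and $\mathcal{GI}$ for the class of Gorenstein injective $R$-modules.

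First I would verify that the hypotheses of the Corollary are exactly those demanded by the preceding Proposition: $R$ is two sided noetherian, has finite finitistic flat dimension (this is what ``of finitistic flat dimension'' is taken to mean here), satisfies the Auslander condition, and every acyclic complex of injective $R$-modules is totally acyclic. That Proposition then yields $\mathcal{SC}\subseteq\mathcal{GI}$, i.e. every strongly cotorsion $R$-module is Gorenstein injective. For the reverse inclusion I would invoke Lemma 1, whose hypotheses are weaker — only that $R$ be two sided noetherian and that every acyclic complex of injective modules be totally acyclic — both of which hold here; Lemma 1 gives that every Gorenstein injective module is strongly cotorsion, that is, $\mathcal{GI}\subseteq\mathcal{SC}$. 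Combining the two inclusions gives $\mathcal{SC}=\mathcal{GI}$, as claimed.

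There is no genuine obstacle in this argument: all the work has already been carried out in Lemmas 1--3 and in the preceding Proposition (which itself rests on the complete hereditary cotorsion pair $(\mathcal{F},\mathcal{SC})$ coming from Yan's theorem, and on Proposition 6). The only point needing a moment's care is bookkeeping — making sure ``finitistic flat dimension'' is read as ``finite finitistic flat dimension'', since that finiteness is precisely what is used, through the cotorsion pair $(\mathcal{F},\mathcal{SC})$, in the Proposition's proof of $\mathcal{SC}\subseteq\mathcal{GI}$, whereas the opposite inclusion via Lemma 1 requires none of it.
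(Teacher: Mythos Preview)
Your proposal is correct and matches the paper's own proof essentially verbatim: the paper simply writes ``This follows from Lemma 1 and Proposition 7,'' combining the two inclusions exactly as you describe. Your remark about interpreting ``of finitistic flat dimension'' as ``of finite finitistic flat dimension'' is also on point, since that is what Proposition 7 requires.
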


\begin{proof}
This follows from Lemma 1  and  Proposition 7.
\end{proof}

We can prove now:\\

\begin{proposition}
Let $R$ be a two sided noetherian ring  of finitistic flat dimension such that $R$ satisfies the Auslander condition and every acyclic complex of injective $R$-modules is totally acyclic. Then every injective $R$-module has finite flat dimension.
\end{proposition}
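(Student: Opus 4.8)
The plan is to obtain this at once from Corollary 3 (which is where all the hypotheses on $R$ are really spent) together with the elementary fact that an injective module is always left orthogonal to every Gorenstein injective module.

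First I would record the content of Corollary 3 in cotorsion-pair language. Under the present hypotheses Corollary 3 gives $\mathcal{SC}=\mathcal{GI}$, where $\mathcal{SC}$ is the class of strongly cotorsion modules and $\mathcal{GI}$ the class of Gorenstein injective modules. Since $R$ has finite finitistic flat dimension, $(\mathcal{F},\mathcal{SC})$ is a complete hereditary cotorsion pair, with $\mathcal{F}$ the class of modules of finite flat dimension; hence
$$\mathcal{F}={}^{\perp}\mathcal{SC}={}^{\perp}\mathcal{GI}.$$
Thus, to see that every injective module has finite flat dimension, it suffices to check that every injective module lies in ${}^{\perp}\mathcal{GI}$, that is, that $\mathrm{Ext}^{1}(E,G)=0$ for every injective module $E$ and every Gorenstein injective module $G$.

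For that I would argue straight from the definition. Given $G$ Gorenstein injective, fix a totally acyclic complex of injective modules $\mathbf{I}=(\cdots\to I_{1}\to I_{0}\to I_{-1}\to\cdots)$ with $G=Z_{0}(\mathbf{I})$. Then $0\to G\to I_{0}\to I_{-1}\to I_{-2}\to\cdots$ is an injective coresolution of $G$ (exact because $\mathbf{I}$ is acyclic), so $\mathrm{Ext}^{j}(E,G)$ is the $j$-th cohomology of the complex $\Hom(E,I_{0})\to\Hom(E,I_{-1})\to\cdots$. Since $E$ is injective and $\mathbf{I}$ is totally acyclic, the complex $\Hom(E,\mathbf{I})$ is acyclic, and its exactness at the terms $\Hom(E,I_{-j})$ with $j\ge 1$ is exactly the assertion that $\mathrm{Ext}^{j}(E,G)=0$ for all $j\ge 1$; in particular $\mathrm{Ext}^{1}(E,G)=0$. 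This part uses nothing beyond the definition of Gorenstein injectivity and is valid over any ring. Combining it with the previous paragraph, every injective module $E$ lies in ${}^{\perp}\mathcal{GI}=\mathcal{F}$, i.e. has finite flat dimension.

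I do not expect a genuine obstacle here: the real work is all inside Corollary 3, and the orthogonality $E\in{}^{\perp}\mathcal{GI}$ is formal. If one wishes to avoid explicitly identifying ${}^{\perp}\mathcal{GI}$ with $\mathcal{F}$, the same argument can be run concretely: using completeness of $(\mathcal{F},\mathcal{SC})$, write a short exact sequence $0\to W\to F\to E\to 0$ with $W\in\mathcal{SC}=\mathcal{GI}$ and $F$ of finite flat dimension; since $\mathrm{Ext}^{1}(E,W)=0$ by the above, this sequence splits, so $E$ is a direct summand of $F$ and therefore has finite flat dimension.
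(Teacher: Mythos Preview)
Your proof is correct and follows essentially the same route as the paper: invoke Corollary~3 to get $\mathcal{GI}=\mathcal{SC}$, hence ${}^{\perp}\mathcal{GI}=\mathcal{F}$, and then use that injectives lie in ${}^{\perp}\mathcal{GI}$. The only difference is that you spell out the (standard) verification of $Inj\subseteq{}^{\perp}\mathcal{GI}$ from the definition of total acyclicity, whereas the paper simply asserts it.
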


\begin{proof}
By Corollary 3 above we have that $\mathcal{GI} = \mathcal{SC}$ in this case. It follows that $^\bot \mathcal{GI} = \mathcal{F}$ with $\mathcal{F}$ the class of modules of finite flat dimension. Since the class of injective modules is contained in $^\bot \mathcal{GI}$, we have that $Inj \subseteq \mathcal{F}$.
\end{proof}

We can give now the following characterization of noncommutative Iwanaga-Gorenstein rings:\\

\begin{theorem}
Let $R$ be a two sided noetherian ring  of finitistic flat dimension that satisfies the Auslander condition. The following are equivalent:\\
1. $R$ is an Iwanaga-Gorenstein ring.\\
2. Every acyclic complex of injective left $R$-modules is totally acyclic and every acyclic complex of injective right $R$-modules is totally acyclic.
\end{theorem}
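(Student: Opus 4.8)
\emph{$(1)\Rightarrow(2)$.} The plan is to prove the two implications separately; this one is the classical direction. Over an Iwanaga--Gorenstein ring every acyclic complex of injective left $R$-modules is totally acyclic, and likewise for right modules: see \cite[Theorem 10.1.13]{enochs:00:relative}. (Concretely, if $id({}_RR)=id(R_R)\le m$ then every injective module has projective dimension $\le m$; so for an acyclic complex $X$ of injectives with cocycles $Z^j$ one has $\mathrm{Ext}^i(I,Z^j)\cong\mathrm{Ext}^{\,i+k}(I,Z^{j-k})=0$ for $i\ge 1$, $k\gg 0$ and every injective $I$, whence $\mathrm{Hom}(I,X)$ is acyclic.)

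\emph{$(2)\Rightarrow(1)$, setup.} All the standing hypotheses are left--right symmetric: $R$ is two-sided noetherian and has finite finitistic flat dimension, the Auslander condition is left--right symmetric by \cite[Theorem 3.7]{fossum:75:auslander} (this is also why Proposition 7 and Proposition 8 have verbatim right-hand analogues), and statement (2) is assumed on both sides. Hence Proposition 8 and its right-hand analogue apply and give that every injective left $R$-module and every injective right $R$-module has finite flat dimension. Since $R$ has finite finitistic flat dimension, there is an integer $n$ with $fd(M)\le n$ for every $R$-module $M$ of finite flat dimension, on either side; in particular $fd(I)\le n$ for every injective $R$-module $I$.

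\emph{$(2)\Rightarrow(1)$, main step.} It remains to prove $id({}_RR)<\infty$; then $id(R_R)<\infty$ follows by symmetry, and the two inequalities together say precisely that $R$ is Iwanaga--Gorenstein. Consider the minimal injective resolution $0\to R\to E^0\to E^1\to\cdots$ of ${}_RR$. Each $E^i$ has flat dimension $\le n$; moreover, by the Auslander condition in Huang's form \cite[Theorem 1.3]{huang:14:auslander} (the injective envelope does not raise the flat dimension) every cosyzygy $\Omega^{-i}(R)$ has finite flat dimension, hence $\Omega^{-i}(R)\in\mathcal{F}$ for all $i\ge 0$, where $\mathcal{F}$ denotes the class of modules of finite flat dimension. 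Now recall that $\mathcal{F}={}^{\bot}\mathcal{GI}$ (proof of Proposition 8), that $\mathcal{GI}=\mathcal{SC}$ (Corollary 3), and hence, using Lemma 3, that $\mathcal{F}\cap\mathcal{GI}=\mathcal{F}\cap\mathcal{SC}=Inj$. Therefore it is enough to show that $\Omega^{-n}(R)$ is Gorenstein injective: being also in $\mathcal{F}$, it is then injective, so the minimal injective resolution of ${}_RR$ breaks off at stage $n$ and $id({}_RR)\le n$. By hypothesis (2) every acyclic complex of injective modules is totally acyclic, so a module is Gorenstein injective as soon as it is a cocycle of an acyclic complex of injective modules; thus the goal is to exhibit $\Omega^{-n}(R)$ as such a cocycle. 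Its minimal injective coresolution furnishes the right half of the complex; for the left half one builds an exact left injective resolution using the flatness of the injective envelope $E^0$ of $R$ --- which is exactly what the Auslander condition delivers, as in the proof of Lemma 2 --- together with completeness of the cotorsion pair $(\mathcal{F},\mathcal{GI})$.

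\emph{Main obstacle.} The crux is this last step: producing an \emph{exact} left injective resolution of $\Omega^{-n}(R)$. The left half inherited naively from the injective resolution of $R$ itself stalls after $n$ steps, at $E^0$, with $R$ left over, and cannot be prolonged, since $R$ is in general not a quotient of an injective module. One must therefore argue differently --- for instance by combining the estimates $fd(E^i)\le n$ and $fd(E^i)\le i$ on the terms of the minimal injective resolution in a Bass-style propagation that forces $\Omega^{-n}(R)$ directly into $\mathcal{SC}$, equivalently that forces $\mathrm{Ext}^{\,n+1}_R(C,R)=0$ for every $C\in\mathcal{F}$. Both the Auslander condition and the finiteness of the finitistic flat dimension are essential here: without the latter one already loses the uniform bound $n$ (a regular ring of infinite Krull dimension satisfies the Auslander condition, has infinite finitistic flat dimension, and is not Iwanaga--Gorenstein), and without the former one cannot bound the flat dimensions of the cosyzygies.
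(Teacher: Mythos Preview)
Your $(1)\Rightarrow(2)$ and your setup for $(2)\Rightarrow(1)$ are fine: invoking Proposition~8 on both sides to get that every injective (left or right) $R$-module has finite flat dimension is exactly the right first move. The gap is in your ``main step''. You try to show $id({}_RR)\le n$ by proving $\Omega^{-n}(R)$ is Gorenstein injective, and you yourself identify the obstruction: the naive left half of the would-be totally acyclic complex through $\Omega^{-n}(R)$ terminates at $R$ after $n$ steps and cannot be continued. Your proposed fix---a ``Bass-style propagation'' forcing $\mathrm{Ext}^{n+1}_R(C,R)=0$ for all $C\in\mathcal{F}$---is only gestured at, and it is not clear how to carry it out: knowing $fd(C)\le n$ and $fd(E^i)\le\min(i-1,n)$ does not by itself produce this Ext vanishing. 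As written, the proof of $(2)\Rightarrow(1)$ is incomplete.

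The paper sidesteps this difficulty entirely with a short duality argument. Having Proposition~8 on the \emph{right}, one argues as follows: for $F$ a flat left $R$-module, $F^{+}$ is an injective right $R$-module, hence $fd_{R}(F^{+})<\infty$ by the right-hand Proposition~8; dualizing a finite flat resolution of $F^{+}$ gives $id({}_{R}F^{++})<\infty$; since $F\hookrightarrow F^{++}$ is pure and $R$ is left noetherian, $id({}_RF)\le id({}_RF^{++})<\infty$ (\cite[Lemma~9.1.5]{enochs:00:relative}). Taking $F={}_RR$ yields $id({}_RR)<\infty$, and then $id(R_R)<\infty$ by \cite[Proposition~9.1.6]{enochs:00:relative}. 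Note how this route never needs to build an acyclic complex of injectives through any particular module; it converts ``injectives have finite flat dimension'' on one side directly into ``flats have finite injective dimension'' on the other via the character functor, which is precisely what your approach was missing.
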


\begin{proof}
1 $\Rightarrow$ 2. is known (\cite[Theorem 10.1.13]{enochs:00:relative}).\\
2 $\Rightarrow$ 1. Let $F$ be a flat left $R$-module. Then $F^+$ is an injective right $R$-module. By Proposition 7, $flat. dim. F^+ < \infty$. Therefore $inj.dim. F^{++} < \infty$. Since $R$ is left noetherian and $F \subseteq F^{++}$ is a pure submodule it follows that $inj. dim F \le inj. dim. F^{++} < \infty$ (\cite[Lemma 9.1.5]{enochs:00:relative}). In particular, for $F = R$ we obtain that $inj.dim._R R < \infty$.\\
Since $R$ is left and right noetherian and $inj.dim._R R < \infty$ it follows (\cite{enochs:00:relative}, Proposition 9.1.6) that $inj.dim R_R < \infty$. Thus $R$ is an Iwanaga-Gorenstein ring.
\end{proof}

\bibliographystyle{plain}



\bibliographystyle{plain}

\end{document}